\documentclass{amsart}
\usepackage[dvipdfmx]{graphicx}
\usepackage{amscd}
\usepackage{color}
\sloppy
% Allow the usage of utf8 characters

\usepackage{amsmath,amssymb,amsthm}

\newtheorem{Def}{Definition}[section]
\newtheorem{thm}{Theorem}[section]

\newtheorem{lem}{Lemma}[section]
\newtheorem{prop}{Proposition}[section]
\newtheorem{claim}{Claim}[section]

\begin{document}

%\title{Decomposition of lens spaces with three handlebodies}
\title[ Trisections with Kirby-Thompson length 2]{Trisections with Kirby-Thompson length 2}
\author{Masaki Ogawa}
\date{}
\begin{abstract}
Kirby and Thompson introduced a length of a trisection in \cite{KT}.
They also defined the length of a 4-manifold as the minimum of length among all lengths of trisection of a 4-manifold.
In this paper, we consider trisections whose Kirby-Thompson length is 2.
Kirby and Thompson conjectured that length 2 trisection is a trisection of 4-manifold with length 0. We shall prove this conjecture in this paper.
\end{abstract}

\maketitle
\section{introduction}
 Gay and Kirby introduced a decomposition of a 4-manifold with three four-dimensional handlebodies called a trisection \cite{GK}. This is studied by many authors and applied to three-dimensional topology and knot theory.
 
 In this paper,  we consider the invariant $L$ of a closed 4-manifold which is introduced by Kirby and Thompson in \cite{KT}.
 They define the invariant $L_\mathcal{T}$ of a trisection $\mathcal{T}$ by using the curve complex.%curve complex on a surface ではない.
 The invariant of a closed, oriented, 4-manifold $L$ is defined by taking the minimum of the invariant of a trisection $L_\mathcal{T}$ and classifies the 4-manifolds which satisfies that  $L=0$. 
 Also, they consider a trisection that satisfies that $L_\mathcal{T}=1$ and showed it is a trisection of a 4-manifolds with $L_X=0$.
 Following this study, a relative version is studied in \cite{rel}. In the paper, the authors defined an invariant of a compact 4-manifold $r\mathcal{L}$ and show $r\mathcal{L}(X)=0$ if and only if $X\cong B^4$.
 
 Bridge trisection, introduced by Meier and Zupan, is a pair of trisection and a knotted surface that is decomposed into some disks \cite{MZ1, MZ2}.
 This is analogous to a bridge splitting of a knot in a 3-manifold.
For the knotted surfaces in 4-manifolds, following the invariant $L$, the invariant of knotted surface $\mathcal{L}$ is defined and studied in \cite{Br}.
From the above results, we can see that the above invariants are one measure of their complexity.
On the other hand, these invariants are difficult to calculate.

 For such an invariant, Kirby and Thompson conjectured that a trisection of closed-oriented 4-manifolds that satisfies that $\mathcal{L}_\mathcal{T}=2$ is also a trisection of a 4-manifold with $\mathcal{L}=0$.
 In this paper, we shall prove this conjecture by using a Kirby diagram induced from a trisection diagram. The result is the following.
 
 \begin{thm}\label{thm1}
 	If there exists a trisection $\mathcal{T}$ such that $L_{X, \mathcal{T}}=2$, then $L_X=0$ and $X$ is diffeomorphic to a connect sum of copies of $S^1\times S^3$, $S^2\times S^2$, $\pm\mathbb{C}P^2$, and $S^4$.
 \end{thm}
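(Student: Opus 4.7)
The plan is to exploit that $L_{X,\mathcal{T}}=2$ forces the diagram $(\Sigma;\alpha,\beta,\gamma)$ to be very close to a standard one, extract a Kirby diagram from it, and then simplify it by Kirby calculus until only handles corresponding to $S^4$, $S^1\times S^3$, $S^2\times S^2$, and $\pm\mathbb{C}P^2$ summands remain. Since $L_X=0$ is characterised in \cite{KT} by $X$ being such a connect sum, this will give both conclusions of the theorem at once.

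First, I would unpack the definition of $L_\mathcal{T}$ from \cite{KT}: it decomposes as a sum of three nonnegative integers, one attached to each unordered pair of handlebodies, each integer measuring how far the corresponding pair of cut systems is from being in standard position in the relevant curve/cut complex. The condition $L_\mathcal{T}=2$ therefore leaves only two distributions to consider: a $2+0+0$ case in which one pair is at distance $2$ and the other two pairs are already standardly positioned, and a $1+1+0$ case in which two pairs are at distance $1$ and the third is standard.

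Next, in each case I would use the small distances to put $(\Sigma;\alpha,\beta,\gamma)$ into a concrete normal form. A vanishing distance pins down the relative position of the two relevant cut systems up to handle slides and so lets me replace them by an explicit standard model, as used in Kirby--Thompson's analysis of $L=0$ and $L=1$. A distance of $1$ in a given pair translates into a single band move on the corresponding cut system, and a distance of $2$ into two successive such moves. After these normalisations the remaining flexibility is controlled by at most two band moves, which I would enumerate up to homeomorphism of $\Sigma$ fixing the already-normalised systems, producing for each case a short list of possible trisection diagrams.

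Then for each diagram on the list I would write down the induced Kirby diagram of $X$: the $\alpha$-curves give the $1$-handles, while the $\beta$- and $\gamma$-curves produce $2$-handle attaching circles with framings read off from the diagram. By a short sequence of handle slides and $1$--$2$ and $2$--$3$ handle cancellations, each such Kirby diagram should reduce to an unlink of $0$-framed and $\pm1$-framed circles together with dotted circles, which exhibits $X$ as the asserted connect sum and yields $L_X=0$. The main obstacle I expect is the combinatorial case analysis in the $1+1+0$ situation: two independent length-$1$ moves, one on the $\beta$-system and one on the $\gamma$-system, can interact in ways that at first sight produce genuinely new Kirby diagrams, and showing that in every case the resulting diagram still simplifies to a standard one is where the bulk of the technical work will be concentrated.
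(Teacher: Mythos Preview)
Your outline matches the paper's approach: the same $2{+}0{+}0$ versus $1{+}1{+}0$ case split on the realization loop, normalisation of the diagram, extraction of a Kirby diagram, and simplification to split unknots and Hopf links. Two points of sharpening are worth noting. First, the paper normalises $(\Sigma;\alpha,\beta)$ to the \emph{standard} genus-$g$ Heegaard diagram of $S^3$ (Lemma~\ref{simple}), so the resulting Kirby diagram has no $1$-handles at all and consists solely of the $\gamma$-curves with surface framing; your description with $\alpha$ as dotted circles and both $\beta,\gamma$ as $2$-handles is a different (though equivalent) picture and would make the calculus heavier. Second, the paper does not attempt to enumerate diagrams up to homeomorphism; instead it develops three structural tools---an unknotting criterion (Lemma~\ref{unknot}), a framing formula via algebraic intersection numbers (Lemma~\ref{framing}), and a combinatorial constraint forcing reciprocity of good pairs (Lemma~\ref{region})---and then runs a case analysis on which $\beta_j$ each modified $\gamma$-curve is dual to, showing in every branch that the relevant sublink splits off as unknots or Hopf links with framing $0$ or $\pm1$. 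Finally, a terminological correction: the length-$1$ steps inside $\Gamma_\gamma$ (or $\Gamma_\beta$) are type-$0$ edges in the cut complex, i.e.\ replacement of one curve by a \emph{disjoint} one, not band moves; this distinction matters for setting up the case analysis correctly. Incidentally, the $2{+}0{+}0$ case (Propositions~\ref{prop_1} and~\ref{prop_2}) is at least as involved as the $1{+}1{+}0$ case, contrary to your expectation.
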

 
 To show this theorem, we consider the Kirby diagram induced by a trisection diagram.
 So we construct a Kirby diagram from a trisection diagram.
 The framing coefficient of the component of the Kirby diagram obtained from the trisection diagram can be calculated by using an algebraic intersection number.

 This paper is organized as follows.
 In Section 2, we review the definition of a trisection and its Kirby-Thompson length by using a curve complex.
 In Section 3, following \cite{GK}, we review the construction of a Kirby diagram from a trisection diagram.
After that, we shall prove the main result by dividing our situation into some cases in Section 4.

% \section{Algebraic intersection number of simple closed curves of closed surface}
 %Let $\Sigma_g$ be a genus g closed surface and $<\alpha_1, \beta_1, . . . . , \alpha_g, \beta_g>$ a symplectic basis of $H_1(\Sigma_g)$ i.e $\alpha_i$ and $\beta_i$ intersects at exactly one point.

% Then the algebraic intersection number of $C_1$ and $C_2$ can be calculated by 
 %\[
 %		i(C_1, C_2)=\sum_{i=1}^g (\alpha^1_i\beta^2_i-\alpha^2_i\beta^1_i).
 %\]
% where $C_1=(a^1_1, b^1_1, . . . , a^1_g, b^1_g)$ and $C_2=(a^2_1, b^2_1, . . . , a^2_g, b^2_g)$.  See Section 6 of \cite{FM} for more detail.

%The index of an intersection of two simple closed curves in a surface $S$ is $+1$ if the orientation of the intersection agrees with the orientation of $S$ and is $-1$ otherwise.
%We can calculate algebraic intersection by summing all the indices of the intersection of $C_1$ and $C_2$. 
 \section{A trisection and its length}
 
 First, we review the definition of a trisection and its length $\mathcal{L}_{\mathcal{T}}$.
 Roughly speaking, a trisection is a decomposition of a 4-manifold into three 4-dimensional handlebodies.
 This is analogous to a Heegaard splitting of a 3-manifold.
 \begin{Def}
 	A $(g; k_1, k_2, k_3)$-trisection of a closed, oriented 4-manifold $X$ is a decomposition $X=X_1\cup X_2\cup X_3$ where $X_i\cong \natural^{k_i}S^1\times B^3$, $X_i\cap X_j\cong \natural^g S^1\times B^2$ for $i\neq j$ and $X_1\cap X_2\cap X_3 \cong \#^g S^1\times S^1$.
 \end{Def}
A trisection is said to be balanced when $k_1=k_2=k_3$ is satisfied.
For a trisection, we can define its diagram called a trisection diagram.
A trisection diagram is a 4-tuple of a set of simple closed curves and a closed surface.
\begin{Def}
	A $(g; k_1, k_2, k_3)$ trisection diagram is a 4-tuple $(\Sigma; \alpha, \beta, \gamma)$ such that each of $(\Sigma; \alpha, \beta)$, $(\Sigma; \beta, \gamma)$ and $(\Sigma; \gamma, \alpha)$ is a Heegaard  diagram of $\#^{k_i}S^1\times S^2$ , $i=1, 2, 3$ respectively
\end{Def}
Next, we review the definition of the length of trisection. A {\it cut system} of a genus $g$ closed surface is a union of $g$ essential simple closed curves which cut open the closed surface into a $2g$-punctured sphere.
\begin{Def}
The cut complex $\mathcal{C}$ is a 1-complex whose vertices correspond to an isotopy class of cut system.
 There are two types of edges: one is a type 0 and the other is a type 1.
 Two vertices $\alpha=\{\alpha_1, . . . , \alpha_g\}$, $\alpha'=\{\alpha_1', . . . , \alpha_g'\}$ are connected by a type 0 edge if $\alpha$ and $\alpha'$ agree on $g-1$ curves and their final curves
are disjoint.
 Two vertices $\alpha=\{\alpha_1, . . . , \alpha_g\}$, $\alpha'=\{\alpha_1', . . . , \alpha_g'\}$ are connected by a type 1 edge if $\alpha$ and $\alpha'$ agree on $g-1$ curves and their final curves
intersect at exactly one point.
\end{Def}

Following Kirby and Thompson \cite{KT}, we also use $\Gamma_\alpha$, $\Gamma_\beta$ and $\Gamma_\gamma$ for a set of all vertices in a cut complex that are path connected to $\alpha$, $\beta$ and $\gamma$ by only type 0 edges respectively.
Also, if $\alpha_i$ and $\beta_i$ are parallel, we write $\alpha_i\mathcal{P}\beta_j$, if $\alpha_i$ and $\beta_i$ intersects exactly one point, we write $\alpha_i\mathcal{D}\beta_j$.
\begin{Def}
	We say two cut systems $\alpha$ and $\beta$ are in good position with respect to each other if we can order each, $\alpha=\alpha_1, . . . , \alpha_g$, $\beta=\beta_1, . . . , \beta_g$, so that for each $i$, either $\alpha_i$ is parallel to $\beta_i$ or $\alpha_i$ intersects $\beta_i$ at exactly one point and $\alpha_i\cap\beta_j=\emptyset$ for $i\neq j$.
	$\alpha_i$ and $\beta_j$ are a good pair if they are either parallel or intersect at exactly one point and disjoint from all other curves in $\alpha$  and $\beta$.
\end{Def}
\begin{Def}
	Let $l_{X, \mathcal{T}}$ be the length of the shortest closed path in $\mathcal{C}$ that intersects each of $\alpha$, $\beta$, and $\gamma$, which also satisfies the following:
	\begin{enumerate}
		\item[(1)] there are pairs $(\alpha_\beta, \beta_\alpha), (\beta_\gamma, \gamma_\beta)$ and $(\gamma_\alpha, \alpha_\gamma)$ in $(\Gamma_\alpha, \Gamma_\beta)$, $(\Gamma_\beta, \Gamma_\gamma)$ and $(\Gamma_\gamma, \Gamma_\alpha)$ respectively, which are all good, so it take $g-k_i$ type 1 moves to travel from the vertex corresponding to one element in the pair to the other.
		\item[(2)] the subpath of $l_{X, \mathcal{T}}$ connecting $\alpha_\beta$ to $\alpha_\gamma$ (resp. $\beta_\alpha$ to $\beta_\gamma$, $\gamma_\alpha$ to $\gamma_\beta$) remains within $\Gamma_\alpha$ (resp. $\Gamma_\beta$, $\Gamma_\gamma$).
	\end{enumerate}
\end{Def}

\begin{Def}
	$L_{X, \mathcal{T}}=l_{X, \mathcal{T}}-3g+k_1+k_2+k_3$.
\end{Def}
\begin{Def}
	The length $L_X$ of  a closed 4-manifold $X$ is the minimum value of $L_{X, \mathcal{T}}$ over all trisections $\mathcal{T}$ of $X$.
\end{Def}
For more details about the length of a 4-manifold, see \cite{KT}.
Kirby and Thompson showed that if $L_X=0$, $X$ is diffeomorphic to a connected sum of copies of $S^1\times S^3$, $S^2\times S^2$, $\pm\mathbb{C}P^2$  and $S^4$ in \cite{KT}.
We shall name the shortest path which realizes the length of a trisection $\mathcal{T}$.
 \begin{Def}
 	 A realization loop of a trisection $\mathcal{T}$ is a loop in a cut complex that realizes the length of $\mathcal{T}$.
 \end{Def}
 Let $\mathcal{T}$ be a trisection, $(\Sigma, \alpha, \beta, \gamma)$  a trisection diagram of $\mathcal{T}$ and $l$ a realization loop of $\mathcal{T}$.
 If the length of trisection $\mathcal{T}$  equals 2, it will be sufficient to consider the following cases.

 \begin{enumerate}
 	\item $\Gamma_\gamma\cap l$ consists of two edge and $\alpha=\alpha_\beta=\alpha_\gamma$ and $\beta=\beta_\alpha=\beta_\gamma$.
	\item both $\Gamma_\gamma\cap l$ and $\Gamma_\beta\cap l$ are consists of one edge and $\alpha=\alpha_\beta=\alpha_\gamma$.
 \end{enumerate}
 
	 \begin{figure}[h]
		\centering
			\includegraphics[scale=0.6]{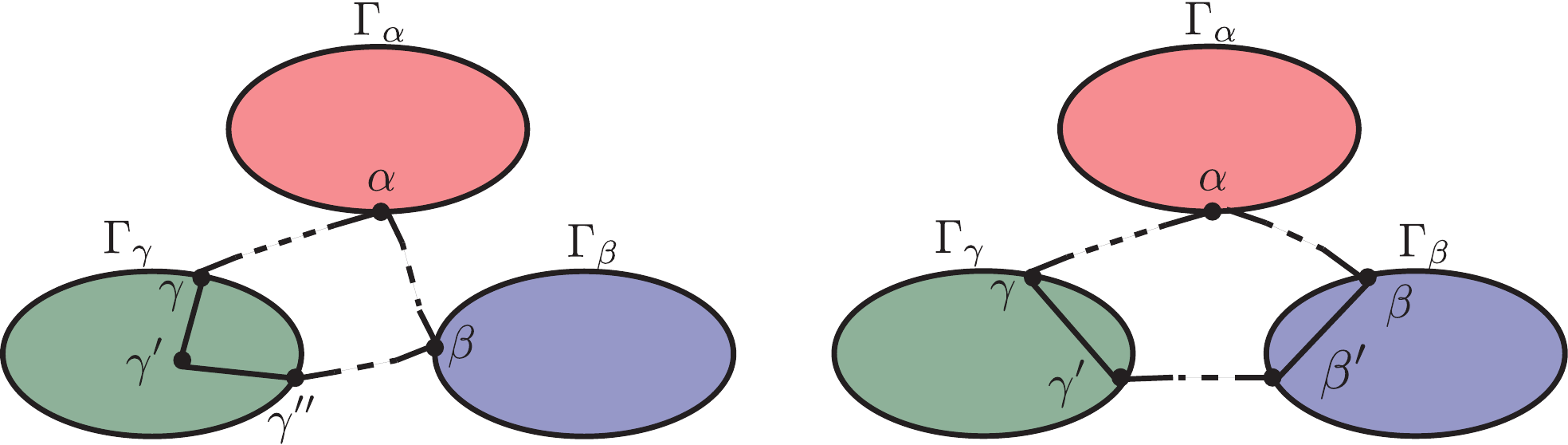}
			\caption
			{The realizing loop will be  either right or left  figure if the length of a trisection is two.}
			\label{typeia}
	\end{figure}
	
The case (1) above can be further divided into two cases; We consider as $\gamma=(\gamma_1, . . . , \gamma_g)$ and $\gamma'=(\gamma_1, . . . , \gamma_{g-1}, \gamma_g')$.
Then there are two cases $\gamma''=(\gamma_1, . . . , \gamma_{g-1}', \gamma_g')$ or $\gamma''=(\gamma_1, . . . , \gamma_{g-1}, \gamma_g'')$ where $\gamma'$ is length one from $\gamma$, $\gamma''$ is length one from $\gamma'$ and length two from $\gamma$. 

 \section{A Kirby diagram from trisection diagram}
 In this section, we review the construction of a Kirby diagram from a trisection diagram. 
 We can obtain the 4-manifold which is represented by a trisection diagram $(\Sigma; \alpha, \beta, \gamma)$ by attaching several 2-handles along $\alpha$, $\beta$ and $\gamma$ to $\Sigma\times D^2$.
 
 We consider a trisection diagram $(\Sigma; \alpha, \beta, \gamma)$ which satisfies that  $(\Sigma, \alpha, \beta)$ is a standard Heegaard diagram of $S^3$.
This implies that $X_1$ is a 4-ball and $\Sigma$ is standardly embedded in $\partial X_1$.
 After cutting open $\Sigma$ along $\alpha$ and $\beta$, we obtain a g punctured sphere $P$. 
We name the boundary of $P$ as $\partial _i$ when it is induced by $\alpha_i$ and $\beta_i$.
We sometimes represent $P$ as a g punctured $\mathbb{R}^2$.
We consider the orientation of $\Sigma$ as an orientation induced by the orientation of $P$.
Suppose $\mathbb{R}^2$ has a standard orientation and the orientation of $P$ is an orientation induced by the standard orientation of $\mathbb{R}^2$ in this paper.
Let $C_1$ and $C_2$ be a simple closed curves on $\Sigma$.
Then the algebraic intersection number of $C_1$ and $C_2$ in $\Sigma$ is denoted by $i(C_1, C_2)$.
See  Section 6 of \cite{FM} for more details about the algebraic intersection number.
Then we shall orient $\alpha_i$ and $\beta_i$ so that $i(\alpha_i, \beta_i)=1$  (See Figure \ref{P}).
	\begin{figure}[ht]
		\centering
			\includegraphics[scale=0.6]{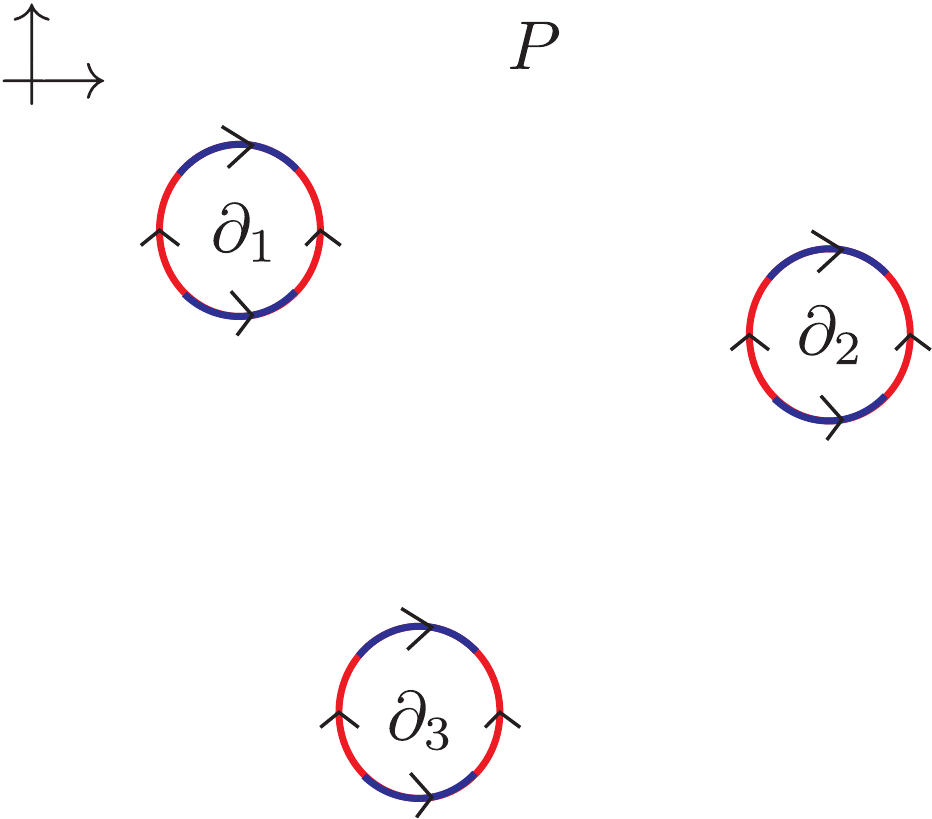}
			\caption
			{We fix the orientation of $P$ as in this figure in this paper and orient $\alpha_i$ and $\beta_i$ as in this picture.}
			\label{P}
	\end{figure}
We call the basis $\{ \alpha_1, \beta_1, . . . . , \alpha_g, \beta_g \}$ of $H_1(\Sigma)$ whose orientation respects the orientation of $P$ a {\it standard basis} in this paper.

We shall now describe the construction of the Kirby diagram.
we consider the surface $\Sigma^{\circ}=(\Sigma, \alpha, \beta)-D$ where $D$ is a disk in $\Sigma$ which is disjoint from $\alpha$, $\beta$ and $\gamma$ (See Figure \ref{TD_1}).
 
  	 \begin{figure}[h]
		\centering
			\includegraphics[scale=0.6]{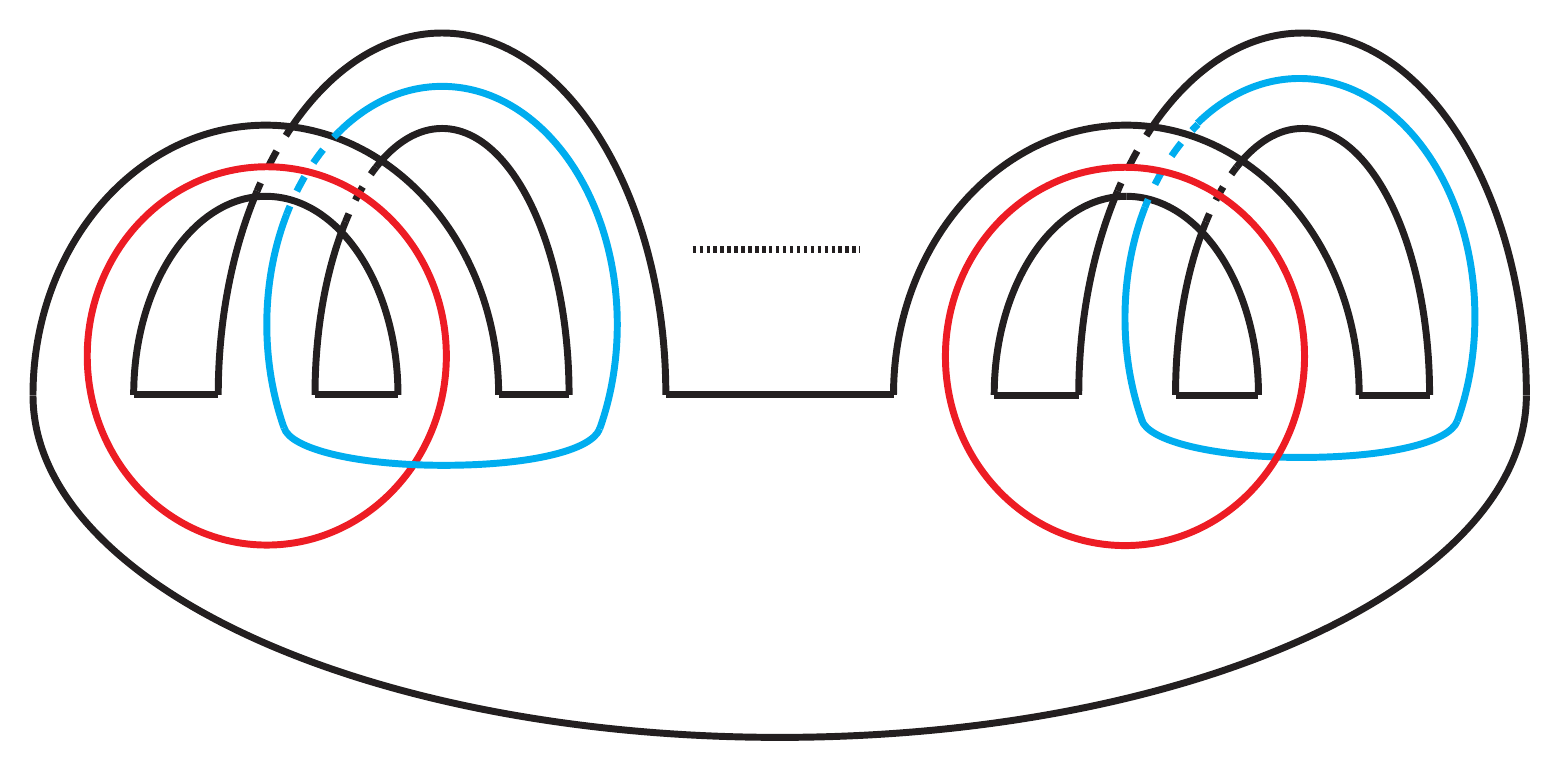}
			\caption
			{The punctured standard genus g Heegaard diagram of $S^3$.}
			\label{TD_1}
	\end{figure}
A 4-manifold $X$ represented by $(\Sigma; \alpha, \beta, \gamma)$ is obtained by attaching 4-dimensional 2-handles to $\Sigma\times D^2$ along $\alpha$, $\beta$ and $\gamma$.
This operation corresponds to that erase $\alpha$ and $\beta$ from the diagram and we consider $\gamma$ as a Kirby diagram. See \cite{GK} for more detail. 
We can assume that $\gamma$ is in $\Sigma^\circ$.
If $\gamma_i$ and $\gamma_j$ intersect $\alpha_i$ and $\beta_i$ exactly once respectively, then they make a crossing in a diagram of $L$.
In such crossing   $\gamma_i$ is the lower arc and $\gamma_j$ is the upper arc (See Figure \ref{crossing}).
All crossing in the Kirby diagram of $X$ obtained by $(\Sigma; \alpha, \beta, \gamma)$ is induced by this process. 
We note that the framings of the components of a Kirby diagram are induced by the surface framing.

  	 \begin{figure}[h]
		\centering
			\includegraphics[scale=0.6]{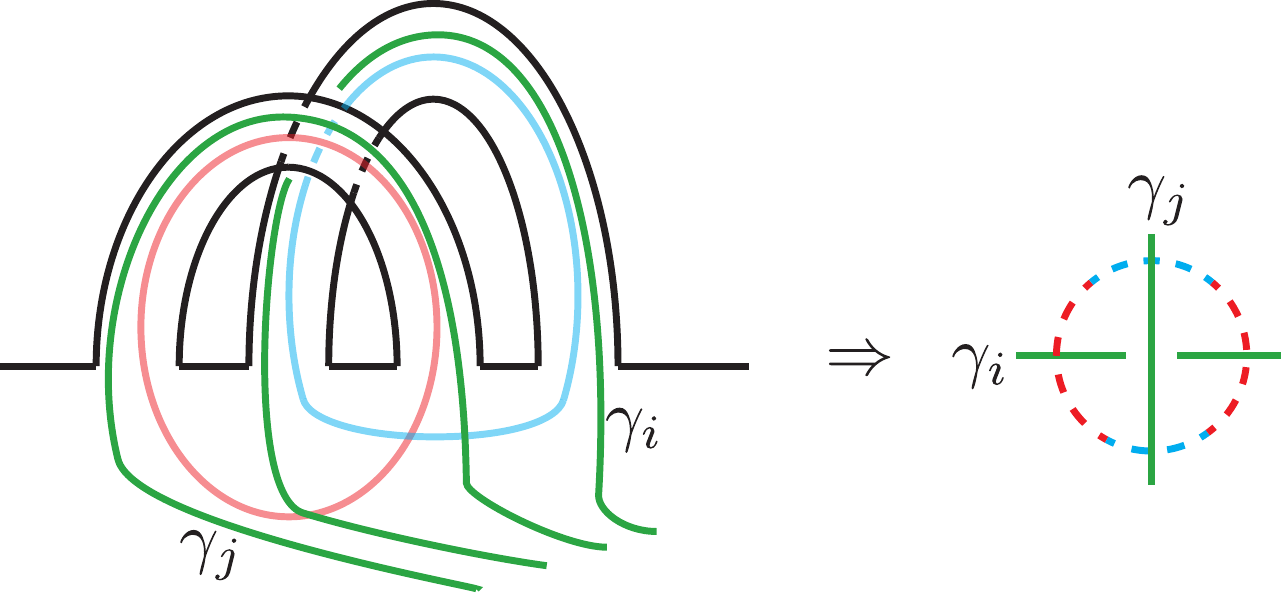}
			\caption
			{The crossings in the Kirby diagram of $X$ were obtained from a trisection diagram.}
			\label{crossing}
	\end{figure}
\begin{lem}\label{unknot}
	Let $L$ be a Kirby diagram obtained from a trisection diagram $(\Sigma; \alpha, \beta, \gamma)$.
	If either $\gamma_i\cap \alpha=\gamma_i\cap \alpha_j$ or $\gamma_i\cap \beta=\gamma_i\cap \beta_j$ is exactly one point, then $\gamma_i$ is an unknot component of $L$.
\end{lem}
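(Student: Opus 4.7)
The plan is to reduce the statement to the genus-one case via a compression argument inside $S^3$. The hypothesis is symmetric under swapping $\alpha \leftrightarrow \beta$ (together with swapping the two handlebodies of the Heegaard splitting), so it suffices to treat the case where $\gamma_i \cap \alpha = \gamma_i \cap \alpha_j$ consists of a single point.

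Since $(\Sigma,\alpha,\beta)$ is the standard genus-$g$ Heegaard diagram of $S^3$, $\Sigma$ bounds the standard handlebody $H_\alpha\subset S^3$ with compressing disks $D_1,\dots,D_g$ satisfying $\partial D_k=\alpha_k$. The hypothesis forces $\gamma_i$ to be disjoint from $D_k$ for every $k\neq j$. I would first compress $\Sigma$ along all $D_k$ with $k\neq j$; because $\gamma_i$ avoids these disks, it survives as a simple closed curve on the resulting torus $T\subset S^3$, still meeting $\alpha_j$ transversely in exactly one point.

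Next I would verify that $T$ is standardly embedded in $S^3$. Because $H_\alpha$ is the standard genus-$g$ handlebody, its spine is an unknotted, unlinked wedge of $g$ circles; compressing $H_\alpha$ along $g-1$ of its standard meridian disks therefore leaves a regular neighborhood of a single unknot, i.e.\ an unknotted solid torus whose boundary $T$ is a standard torus in $S^3$. Once this is in hand, $\gamma_i$ is a simple closed curve on a standard torus $T$ crossing the meridian $\alpha_j$ exactly once, so it is isotopic on $T$ to a $(1,n)$-curve for some integer $n$, and every such curve is unknotted in $S^3$. This identifies $\gamma_i$ as an unknot component of $L$.

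The step I expect to be the main obstacle is the middle one: cleanly justifying that the compressed surface is a \emph{standard} torus rather than merely an embedded one. This rests squarely on the setup of Section 3, where $(\Sigma,\alpha,\beta)$ is arranged to be the standard genus-$g$ Heegaard diagram of $S^3$ so that $H_\alpha$ is the unknotted handlebody; without that hypothesis the compression could produce a knotted solid torus and the final genus-one argument would fail.
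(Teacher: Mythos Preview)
Your argument is correct and follows essentially the same route as the paper: both confine $\gamma_i$ to a once-punctured torus obtained by compressing $\Sigma$ along the $g-1$ disks dual to the curves $\gamma_i$ avoids, and then identify $\gamma_i$ as a $(1,n)$-curve on that torus. The paper phrases the compression as removing $1$-handles of $\Sigma^\circ$ and capping with the disks $D^{\beta}_k$ (working the symmetric case $\gamma_i\cap\beta=\gamma_i\cap\beta_j$), while you phrase it as compressing the standard handlebody $H_\alpha$ along its meridian disks; your extra care in checking that the resulting torus is standard is exactly the point the paper leaves implicit in the phrase ``hence $\gamma_i$ is a torus knot.''
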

\begin{proof}
	Let $D$ be a disk in $\Sigma$ which is disjoint from $\alpha$, $\beta$ and $\gamma$ and $(h^{\alpha}_1, . . . , h^{\alpha}_g)$ and $(h^{\beta}_1, . . . , h^{\beta}_g)$ 2-dimensional 1-handles of $\Sigma-D$ which contain subarcs of $\alpha$ and $\beta$ respectively.
	Then we attach 2-dimensional 2-handles $(D^{\alpha}_1, . . . , D^{\alpha}_g)$ and $(D^{\beta}_1, . . . , D^{\beta}_g)$ to $\partial(\Sigma-D)$ so that $\partial D^{\alpha}_j$ and $\partial D^{\beta}_j$ isotopic to $\alpha_j$ and $\beta_j$ in $\Sigma-D$ respectively for $j\in\{1, . . . , g\}$.
	We consider the case where $\gamma_i\cap \beta=\gamma_i\cap \beta_j$.
	Then $\gamma_i$ is embedded in 
	\[
		(\Sigma-D)-\bigcup_{i\neq j} h^{\alpha}_i
	\]
	since $\gamma_j\cap\beta_i=\emptyset$ for $i\neq j$.
	After attaching  $D^{\beta}_i$ for $i\neq j$ to $(\Sigma-D)-\bigcup_{i\neq j} h^{\alpha}_i$,  we obtain a once punctured torus $T$.
	$\gamma_i$ is embedded in $T$. Hence $\gamma_i$ is a torus knot.
	We can assume that $\beta_j$ is longitude and $\alpha_j$ is a meridian of $T$.
	Since $\gamma_i$ intersects $\beta_j$ exactly once, $\gamma_i$ is a torus knot $T(1, q)$.
	This implies that $\gamma_i$ is an unknot.
\end{proof}

The following lemma shows that we can calculate the framing coefficient of each component of Kirby diagram obtained from a trisection diagram by using algebraic intersection number.
Particularly, we can characterize the component whose framing coefficient is $0$ by the number of intersections with $\alpha$ and $\beta$.
\begin{lem}\label{framing}
Suppose that a trisection diagram $(\Sigma; \alpha, \beta, \gamma)$ satisfies that $(\Sigma; \alpha, \beta)$ is a standard Heegaard diagram of $S^3$.
Let $L$ be a Kirby diagram obtained from a trisection diagram $(\Sigma; \alpha, \beta, \gamma)$ by the construction above.
Also let $\gamma_i$ be a simple closed curve which is represented by $(a_1, b_2, . . . , a_g, b_g)$ in $H_1(\Sigma; \mathbb{Z})$ respect to the standard basis $\{\alpha_1, \beta_1, . . . . , \alpha_g, \beta_g\}$ of $H_1(\Sigma; \mathbb{Z})$.
Then the framing coefficient of $\gamma_i$ in $L$ is 
\[
		- \sum_{j}(i(\gamma_i, \alpha_j)i(\gamma_i, \beta_j))=- \sum_{j} a_jb_j.
\]
\end{lem}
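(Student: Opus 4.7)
The plan is to reinterpret the Kirby-diagram framing coefficient of $\gamma_i$ as the self-linking number $\mathrm{lk}(\gamma_i,\gamma_i^+)$ in $S^3$ using the surface framing inherited from $\Sigma$, and then to evaluate this linking by spanning $\gamma_i$ with a convenient 2-chain built from meridian disks of the standard Heegaard splitting. The hypothesis that $(\Sigma,\alpha,\beta)$ is a standard Heegaard diagram of $S^3$ lets us write $S^3=H_\alpha\cup_\Sigma H_\beta$, view $\gamma_i$ as a knot on the Heegaard surface, and identify the framing used in Section 3 with $\mathrm{lk}(\gamma_i,\gamma_i^+)$, where $\gamma_i^+$ is the pushoff of $\gamma_i$ along a unit normal (say, into $H_\alpha$).

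First I would use the standard-basis expression $[\gamma_i]=\sum_j(a_j[\alpha_j]+b_j[\beta_j])\in H_1(\Sigma;\mathbb{Z})$ to produce a 2-chain $C\subset\Sigma$ with $\partial C=\gamma_i-\sum_j a_j\alpha_j-\sum_j b_j\beta_j$. Letting $D_j^\alpha\subset H_\alpha$ and $D_j^\beta\subset H_\beta$ denote meridian disks with $\partial D_j^\alpha=\alpha_j$ and $\partial D_j^\beta=\beta_j$, the 2-chain
\[
F=C+\sum_j a_j D_j^\alpha+\sum_j b_j D_j^\beta
\]
bounds $\gamma_i$ in $S^3$, so the framing coefficient equals $\gamma_i^+\cdot F$. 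After a generic choice of disks and pushoff, three observations collapse this sum: $\gamma_i^+\cdot C=0$ because $\gamma_i^+$ has been pushed off $\Sigma$; $\gamma_i^+\cdot D_j^\beta=0$ because $\gamma_i^+\subset H_\alpha$ while $\mathrm{int}(D_j^\beta)\subset\mathrm{int}(H_\beta)$; and each transverse crossing of $\gamma_i$ with $\alpha_j$ on $\Sigma$ lifts to a single transverse crossing of $\gamma_i^+$ with the collar of $D_j^\alpha$ inside $H_\alpha$, yielding $\gamma_i^+\cdot D_j^\alpha=\pm\, i(\gamma_i,\alpha_j)$.

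Combining these contributions and converting the answer to the symmetric form $-\sum_j i(\gamma_i,\alpha_j)\,i(\gamma_i,\beta_j)=-\sum_j a_j b_j$ uses the intersection pairing $i(\alpha_k,\beta_\ell)=\delta_{k\ell}$, $i(\alpha_k,\alpha_\ell)=i(\beta_k,\beta_\ell)=0$ on the standard basis, which forces $i(\gamma_i,\alpha_j)=-b_j$ and $i(\gamma_i,\beta_j)=a_j$ (up to the sign fixed by the orientation of $P$). The main obstacle I anticipate is the careful sign bookkeeping at each step: the orientation of $\Sigma$ induced by the orientation of $P$ fixed in Figure \ref{P}, the choice of which side of $\Sigma$ the pushoff $\gamma_i^+$ lives on, and the compatible orientations of the meridian disks $D_j^\alpha,D_j^\beta$ must all be tracked so that the final expression carries exactly the overall minus sign stated in the lemma rather than its opposite.
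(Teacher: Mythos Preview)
Your approach is correct and genuinely different from the paper's. The paper computes the framing diagrammatically: it takes the $\Sigma$-parallel pushoff $C$ of $\gamma_i$, uses the explicit crossing convention from the Kirby-diagram construction (Figure~\ref{crossing}) to count signed crossings between $\gamma_i$ and $C$ occurring inside each $1$-handle $h^\alpha_j$, $h^\beta_j$ of $\Sigma^\circ$, observes that each handle contributes $-\,i(\gamma_i,\alpha_j)\,i(\gamma_i,\beta_j)$ to the signed crossing count, and then halves the total to obtain the linking number. Your argument is homological instead: you cap the $1$-cycle $\gamma_i-\sum_j a_j\alpha_j-\sum_j b_j\beta_j$ with a $2$-chain in $\Sigma$ and then with meridian disks of the two handlebodies to produce a $2$-chain $F$ with $\partial F=\gamma_i$, and evaluate $\gamma_i^{+}\cdot F$ after pushing $\gamma_i^{+}$ into $H_\alpha$, so that only the $D_j^\alpha$ terms survive. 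The paper's method is tied to the specific planar picture and its crossing rules, which is convenient here because those rules are also what drive the later case analysis in Section~4; your method is more conceptual and independent of the diagram, at the price (which you flag) of having to pin down all the orientation conventions to recover the exact sign. One small point worth making explicit in your write-up is the identification of the surface-framing pushoff on $\Sigma$ with the normal pushoff into $H_\alpha$: since the $\Sigma$-parallel copy $C$ is disjoint from $\gamma_i$, it can be isotoped in the collar $\Sigma\times[0,1]\subset H_\alpha$ to $\gamma_i\times\{1\}$ without meeting $\gamma_i\times\{0\}$, so the two pushoffs have the same linking number with $\gamma_i$.
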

\begin{proof}
	Let $C$ be a simple closed curve in $\Sigma^\circ=\Sigma-D^2$ which is parallel to $\gamma_i$.
	We orient $C$ by the same orientation of $\gamma_i$.
	The framing coefficient of $\gamma_i$ is a linking number of a link $\gamma_i\cup C$.
	Let $h^{\alpha}_j$ and $h^{\beta}_j$ be  1-handles of $\Sigma^\circ$ which contains $\alpha_j$ and $\beta_j$ respectively for $j=1, . . . , g$.
	If $\gamma_i$ intersects $\alpha_j$ (resp. $\beta_j$), there are sub arcs in $h^{\beta}_j$ (resp. $h^{\alpha}_j$).
	Hence $C$ is also have subarcs in $h^{\beta}_j$ and $h^{\alpha}_j$. 
	The number of subarcs of $\gamma_i$ in $h^{\beta}_j$ and $h^{\alpha}_j$ equals to that of $C$.
	Let $l$ be one of such subarcs of $C$. 
	
	Suppose that $l$ intersects $\beta_j$ with algebraic intersection number $+1$.
	Then we can assume that the sum of the signs of crossings made by $l$ and $\gamma_i$ is $-i(\gamma_i, \alpha_j)$ (See Figure \ref{sign}).
	On the other hand,  if  $l$ intersects $\beta_j$ with algebraic intersection number $-1$, the sum of the sign of crossings made by $l$ and $\gamma_i$ is $i(\gamma_i, \alpha_j)$.
	Hence we consider the sum of the signs of crossings made by sub arcs of $C$ in $h^{\alpha}_j$ and $\gamma_i$, this  equals to $-i(\gamma_i, \alpha_j)i(\gamma_i, \beta_j)$.
	Same as above we consider about the crossing made by sub arcs of $C$ in $h^{\beta}_j$ and $\gamma_i$, this  also equals to $-i(\gamma_i, \alpha_j)i(\gamma_i, \beta_j)$.
	Hence after  summing  all signs of crossings, we obtain 
	\[
		- 2\sum_{j=1}^g i(\gamma_i, \alpha_j)i(\gamma_i, \beta_j).
	\]
	Hence the linking number of $C\cup \gamma_i$ equals to 
	\[
		-\sum_{j=1}^g i(\gamma_i, \alpha_j)i(\gamma_i, \beta_j)=-\sum_{j=0}^g a_jb_j.
	\]
\end{proof}
 \begin{figure}[h]
		\begin{center}
			\includegraphics[scale=0.6]{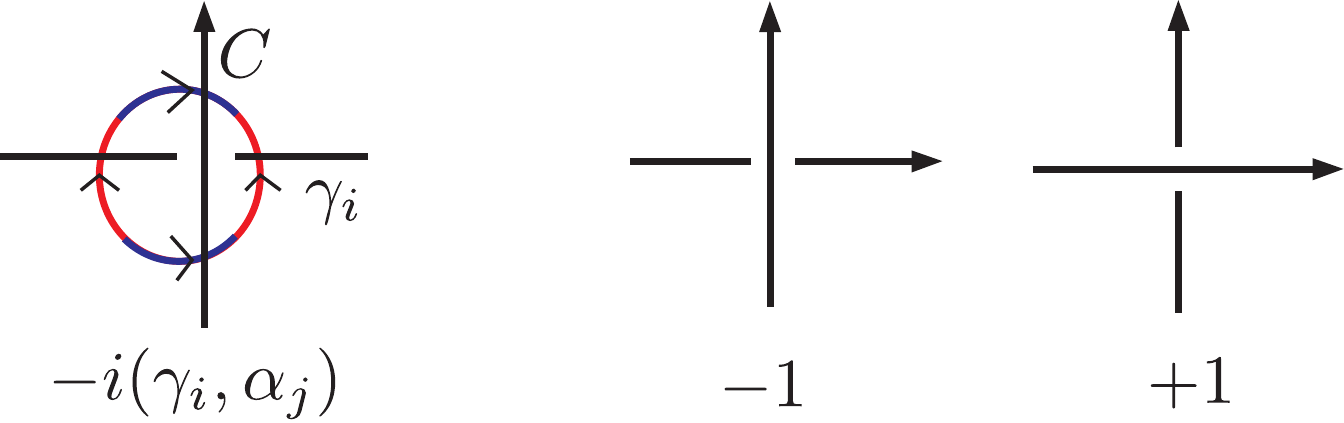}
		\end{center}
			\caption
			{The sign of crossing made by $\gamma$ and $C$ can be calculated as in this figure. The left of this figure shows the case where $C$ intersects $\beta_j$ with algebraic intersection number $+1$.}
			\label{sign}
	\end{figure}
	
	We note that, by considering the basis of $H_1(\Sigma)$ whose orientation is a reverse orientation of standard basis, the framing coefficient of $\gamma_i$ can be calculated by  \[\sum_{j}(i(\gamma_i, \alpha_j)i(\gamma_i, \beta_j))= \sum_{j} a_jb_j.\]
	
	By Lemma \ref{framing}, we can show that if the component $\gamma_i$ is good for $\alpha$ and $\beta$ and satisfies $\gamma_i\mathcal{D}\alpha_j$ and $\gamma_i\mathcal{D}\beta_k$ for $j\neq k$, then the its framing coefficient is $0$.
%\begin{lem}\label{KC}
	%The following framed link represent $S^2\times S^2 \# \pm\mathbb{C}P^2$ or $S^2\times S^2$.
	  %\begin{figure}[H]
		%\centering
%			\includegraphics[scale=0.6]{KD_0.eps}
%			\caption
%			{}
%			\label{}
%	\end{figure}
%\end{lem}
%\begin{proof}
%	We show by handleslides. First, we shall perform the following sequence of handleslides.
%		  \begin{figure}[H]
%		\centering
%			\includegraphics[scale=0.6]{KC_1.eps}
%			\caption
%			{}
%			\label{}
%	\end{figure}
%	After performing this handleslides we obtain the following diagram.
%		  \begin{figure}[H]
%		\centering
%			\includegraphics[scale=0.6]{KC_2.eps}
%			\caption
%			{}
%			\label{}
%	\end{figure}
%\end{proof}

%=================================================================================================================================================================================================
\section{A proof of the main result}
Let $l$ be a realization loop of a trisection $\mathcal{T}$ with length 2.
Then there are two cases where $\Gamma_{\gamma}\cap l$ has two edges or both $\Gamma_{\gamma}\cap l$ and $\Gamma_{\beta}\cap l$ have exactly one edge.
First of all, we consider the former case.

\subsection{The case where $\Gamma_{\gamma}\cap l$ has two edges }
We consider as $\gamma=(\gamma_1, . . , \gamma_g)$ and $\gamma'=(\gamma_1, . . . , \gamma_{g-1}, \gamma_g')$.
Then there are two cases $\gamma''=(\gamma_1, . . . , \gamma_{g-1}', \gamma_g')$ or $\gamma''=(\gamma_1, . . . , \gamma_{g-1}, \gamma_g'')$. 

% We can suppose that $\gamma$ is good for $\alpha$, and $\beta$ is good for $\alpha$. 
 %If  $\gamma_i\mathcal{P}\alpha_j$, $\alpha_i\mathcal{P}\beta_j$ or  $\gamma_i\mathcal{P}\beta_j$ or $\gamma'_i\mathcal{P}\beta_j$, we can obtain the Theorem \ref{thm1}(See proof of Theorem 12  in  \cite{KT}).
% Hence we can assume that if $\gamma_i$ or $\gamma_i'$ is good for a simple closed curves in $\alpha$ or $\beta$, then it intersects them one points not parallel.
% This implies that we can assume that $(\Sigma; \alpha, \beta)$ is a standard Heegaard diagram of $S^3$.

Two cut systems on the same surface are {\it slide equivalent} if one can be transformed to the other by a sequence of handle slides and isotopies.
We say two trisection diagram $(\Sigma, \alpha, \beta, \gamma)$ and $(\Sigma, \alpha', \beta', \gamma')$ are {\it slide equivalent} if $\alpha$, $\beta$ and $\gamma$ are slide equivalent to $\alpha'$, $\beta'$ and $\gamma'$ respectively.
We note that two manifolds are diffeomorphic to each other if their trisection diagrams are slide equivalent.
The following lemma shows that we only have to consider the case where $(\Sigma; \alpha, \beta)$ is the standard Heegaard diagram of $S^3$ if the length of a trisection is 2.

 \begin{lem}\label{simple}
 	Let $\mathcal{T}$ be a trisection and $(\Sigma; \alpha, \beta, \gamma)$ a trisection diagram of $\mathcal{T}$.
	Suppose $\alpha$ is good for $\beta$ and $\gamma$.
	If $\alpha_i\mathcal{P}\beta_j$ or  $\alpha_i\mathcal{P}\gamma_k$ for some $i, j, k$,  then $(\Sigma; \alpha, \beta, \gamma)$ is slide equivalent to $(\Sigma'; \alpha', \beta', \gamma')\#(T; \alpha'', \beta'', \gamma'')$  where $(T; \alpha'', \beta'', \gamma'')$ is a genus one trisection diagram of $S^4$ or $S^1\times S^3$.
 \end{lem}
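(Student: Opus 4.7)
The plan is to locate a once-punctured torus $T_0\subset\Sigma$ bounded by a separating simple closed curve $c$ that, together with one curve from each of $\alpha,\beta,\gamma$, yields a genus one trisection diagram of either $S^4$ or $S^1\times S^3$ on the torus obtained by capping $c$ with a disk. I treat the hypothesis $\alpha_i\mathcal{P}\beta_j$; the case $\alpha_i\mathcal{P}\gamma_k$ is analogous. Using that $\alpha$ is good for both $\beta$ and $\gamma$, I may reorder the cut systems so that $\alpha_1\mathcal{P}\beta_1$ and $\alpha_1$ is paired with $\gamma_1$, which splits into two subcases: (A) $\alpha_1\mathcal{D}\gamma_1$ and (B) $\alpha_1\mathcal{P}\gamma_1$.

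In subcase (A), I take $T_0$ to be a regular neighborhood $N(\alpha_1\cup\gamma_1)$, which is a once-punctured torus with separating boundary $c$. Every curve of $\alpha\cup\beta$ other than $\alpha_1,\beta_1$ is disjoint from both $\alpha_1$ and $\gamma_1$ by the good-pair conditions, and every $\gamma_k$ with $k\neq 1$ is disjoint from $\alpha_1$ (by goodness of $\alpha$ for $\gamma$) and from $\gamma_1$ (as elements of the same cut system); being essential in $\Sigma$, these curves must lie entirely in $\Sigma\setminus T_0$. The curve $\beta_1$ is then isotoped into $T_0$ along the annulus it cobounds with $\alpha_1$. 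After capping $c$ by a disk to recover a torus $T$, the restricted diagram $(T;\alpha_1,\beta_1,\gamma_1)$ satisfies $\alpha_1\mathcal{P}\beta_1$ with $\gamma_1$ transverse to both, which is the genus one trisection diagram of $S^4$ of type $(1;1,0,0)$; the complementary diagram on $\Sigma\setminus T_0$ provides the $(\Sigma';\alpha',\beta',\gamma')$ summand.

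Subcase (B), where $\alpha_1,\beta_1,\gamma_1$ are mutually parallel, is more delicate: their common regular neighborhood is only an annulus, so no once-punctured torus is immediately visible, and every cut-system curve is geometrically disjoint from $\alpha_1$, so a single handle slide cannot directly produce a transverse partner. I instead choose a separating curve $c\subset\Sigma$ bounding a genus one subsurface $T_0$ that contains $\alpha_1$ (such $c$ exists because $\alpha_1$ is non-separating), and then use handle slides within each of $\alpha,\beta,\gamma$ to expel every cut-system curve other than $\alpha_1,\beta_1,\gamma_1$ from $T_0$ while preserving the good-pair structure, producing on $T_0$ the standard genus one trisection of $S^1\times S^3$ with three mutually parallel essential curves. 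The main obstacle is exactly this expulsion step: one must arrange a compatible sequence of slides on all three cut systems so that $\beta_1$ and $\gamma_1$ end up as parallel copies of $\alpha_1$ inside $T_0$, without breaking the good-pair relations for the remaining indices. Tracking the algebraic intersection data through these slides, via the standard-basis framework and the intersection calculus of Lemma \ref{framing}, is the technical core of the proof.
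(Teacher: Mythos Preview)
Your Subcase (A) contains a genuine gap. You claim that every curve of $\alpha\cup\beta$ other than $\alpha_1,\beta_1$ is disjoint from both $\alpha_1$ and $\gamma_1$ ``by the good-pair conditions''. But the hypothesis only says that $\alpha$ is in good position with $\beta$ and with $\gamma$; there is no good-pair relation between $\beta$ and $\gamma$. So while $\beta_l$ (for $l\neq 1$) is indeed disjoint from $\alpha_1$, there is no reason whatsoever for $\beta_l$ to be disjoint from $\gamma_1$, and in general it will not be. Consequently such $\beta_l$ may enter $T_0=N(\alpha_1\cup\gamma_1)$, and your separation argument collapses. The paper's proof confronts exactly this issue: it takes the same neighborhood $N(\alpha_1\cup\gamma_1)$ and then, for every $\beta_l$ that meets $\gamma_1$, performs handle slides of $\beta_l$ over $\beta_1$ (which is parallel to $\alpha_1$ and hence sits inside $T_0$) until no $\beta$-curve other than $\beta_1$ remains in $T_0$. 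This sliding step is the actual content of the argument; without it the proof does not go through.

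Your Subcase (B) is not a proof but an outline with the hard step explicitly deferred. The proposed ``expulsion'' of all other cut-system curves from an arbitrarily chosen genus-one subsurface containing $\alpha_1$ is precisely the whole difficulty, and you give no mechanism for carrying it out; invoking Lemma~\ref{framing} is not to the point, since that lemma computes framing coefficients in a Kirby diagram and says nothing about rearranging curves on $\Sigma$. The paper handles this case far more cheaply: when $\alpha_1,\beta_1,\gamma_1$ are all mutually parallel one is exactly in Case~1 of the proof of Theorem~13 of \cite{KT}, where the common curve lets one reduce the genus of the trisection diagram directly, splitting off a genus-one $S^1\times S^3$ summand.
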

 \begin{proof}
 	We suppose that  $\alpha_i\mathcal{P}\beta_j$ for some $i, j$.
 	Since $\gamma$ is good for $\alpha$, there is a component of $\gamma$ which intersects $\alpha_i$ exactly once or parallel to $\alpha_i$.
	We say $\gamma_1$.
	If $\gamma_1$ is parallel to $\alpha_i$ we can reduce the genus of a trisection diagram and again proceed on the remainder (see case 1 of proof of Theorem 13 of \cite{KT}).
	Hence we can assume that $\gamma_1\mathcal{D} \alpha_i$.
	Then we take a tubular neighborhood $N(\gamma_1\cup \alpha_i)$  of $\gamma_1\cup \alpha_i$  which contains $\beta_j$.
	If there is a component of $\beta$ which intersects $\gamma_1$, we handle sliding such components along $\beta_j$.
	After performing such handle sliding for all such $\beta's$, $N(\gamma_1\cup \alpha_i)$ does not contains components of $\beta$ other than $\beta_j$.
	Now,  $N(\gamma_1\cup \alpha_i)$ is a genus one trisection diagram of $S^4$ with a puncture.
	Hence we complete the proof.
 \end{proof}
 If the length of trisection is 2, we can assume that the realization loop does not contain the edge contained in $\Gamma_{\alpha}$.
Hence we can assume that $\alpha$ is good for both $\gamma$ and $\beta$ if the length of a trisection is 2.

In this subsection, we consider the case where $\Gamma_{\gamma}\cap l$ has two edges from below.
Hence $\Gamma_\beta$ does not contain an edge.
Therefore we can assume that $\gamma_i\mathcal{D}\alpha_i$ for $i=1, . . . , g$ by Lemma \ref{simple} after reordering subindices if $\Gamma_{\gamma}\cap l$ has two edges.

 \begin{lem}\label{region}
 	Let $\mathcal{T}$ be a trisection and $(\Sigma; \alpha, \beta, \gamma)$ a trisection diagram of $\mathcal{T}$.
	Suppose that  $(\Sigma; \alpha, \beta)$ is a standard Heegaard diagram of $S^3$ and $\gamma_i$ and $\gamma_j$ are good for $\alpha$ and $\beta$.
	Then if $\gamma_i\mathcal{D}\beta_j$,  $\gamma_j\mathcal{D}\beta_i$.
 \end{lem}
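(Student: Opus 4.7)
The plan is to translate everything into $H_1(\Sigma;\mathbb{Z})$ written in the standard symplectic basis $\{\alpha_1,\beta_1,\ldots,\alpha_g,\beta_g\}$ (so $i(\alpha_k,\beta_l)=\delta_{kl}$) and exploit the fact that, since $\gamma_i$ and $\gamma_j$ are distinct components of the cut system $\gamma$, they are disjoint and so $i(\gamma_i,\gamma_j)=0$.

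First, I would read off the homology class of $\gamma_i$. We are in the subsection where $\gamma_k\mathcal{D}\alpha_k$ for every $k$, and the good-pair condition forces $\gamma_i$ to be disjoint from $\alpha_l$ for $l\neq i$ and from $\beta_l$ for $l\neq j$. Using the identities $i(\gamma,\alpha_l)=-b_l$ and $i(\gamma,\beta_l)=a_l$ coming from the orientation convention of Figure \ref{P}, this pins down
\[
 [\gamma_i]\;=\;\epsilon_1\alpha_j+\epsilon_2\beta_i
\]
for some signs $\epsilon_1,\epsilon_2\in\{\pm1\}$. (If $i=j$ there is nothing to prove, so I assume $i\neq j$.)

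Next, I would enumerate the possible classes of $\gamma_j$. Writing $[\gamma_j]=\sum_k(a_k'\alpha_k+b_k'\beta_k)$, the hypothesis $\gamma_j\mathcal{D}\alpha_j$ forces $b_j'=\pm1$ and $b_k'=0$ for $k\neq j$. The good-pair condition with $\beta$ then leaves exactly two families of possibilities: either $\gamma_j\mathcal{P}\beta_m$ for some $m$, which combined with $b_j'=\pm1$ forces $m=j$ and so $[\gamma_j]=\epsilon_4\beta_j$; or $\gamma_j\mathcal{D}\beta_m$ for a unique $m$, giving $[\gamma_j]=\epsilon_3\alpha_m+\epsilon_4\beta_j$.

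Finally, I would compute the symplectic pairing $i(\gamma_i,\gamma_j)=\sum_k(a_kb_k'-b_ka_k')$ in each of these cases. A direct check shows that in the parallel case, and in the intersecting case whenever $m\neq i$, the answer collapses to $\pm\epsilon_1\epsilon_4=\pm1$, which contradicts $i(\gamma_i,\gamma_j)=0$. The only surviving possibility is $m=i$, i.e.\ $\gamma_j\mathcal{D}\beta_i$, as claimed. The argument is essentially bookkeeping on the standard symplectic form; the one subtlety to keep in mind is that the parallel sub-case with $m\neq j$ is incompatible with $\gamma_j\mathcal{D}\alpha_j$, and that in the surviving case $m=i$ the pairing becomes $\epsilon_1\epsilon_4-\epsilon_2\epsilon_3$, which is consistent with the sign constraint $\epsilon_1\epsilon_4=\epsilon_2\epsilon_3$ rather than contradicting it.
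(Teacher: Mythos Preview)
Your argument is correct, but it proceeds along a genuinely different route from the paper's own proof. The paper argues geometrically: it cuts $\Sigma$ open along $\alpha\cup\beta$ to obtain the planar surface $P$, observes that the arc of $\gamma_i$ running from $\alpha_i$ to $\beta_j$ cuts off a disk region $R\subset P$ whose boundary consists of subarcs of $\alpha_i,\alpha_j,\beta_i,\beta_j$ and $\gamma_i$, and then notes that since $\gamma_j\mathcal{D}\alpha_j$ the curve $\gamma_j$ must enter and exit $R$; being disjoint from $\gamma_i$, from $\alpha_i$, and from $\beta_j$, it can only do so through $\beta_i$, whence $\gamma_j\mathcal{D}\beta_i$. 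Your proof instead reads off the homology classes $[\gamma_i]=\epsilon_1\alpha_j+\epsilon_2\beta_i$ and $[\gamma_j]=\epsilon_3\alpha_m+\epsilon_4\beta_j$ (or $\epsilon_4\beta_j$ in the parallel case) in the standard symplectic basis and uses $i(\gamma_i,\gamma_j)=0$ to force $m=i$; the key computation $i(\gamma_i,\gamma_j)=\epsilon_1\epsilon_4-\epsilon_2\epsilon_3\delta_{im}$ is exactly right. Your approach is cleaner bookkeeping and needs no picture, while the paper's approach is more visual and ties in directly with the cut-open planar model used elsewhere. One small remark: you invoke the ambient convention $\gamma_k\mathcal{D}\alpha_k$ for all $k$; strictly speaking you only need this for $k=i,j$, which is exactly what the paper's proof assumes as well (``We are now consider $\gamma_i\mathcal{D}\alpha_i$ and $\gamma_j\mathcal{D}\alpha_j$ to be satisfied'').
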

 \begin{proof}
 	Let $P$ be a planar surface obtained by cutting $\Sigma$ along $\alpha$ and $\beta$.
	We are now consider $\gamma_i\mathcal{D}\alpha_i$ and $\gamma_j\mathcal{D} \alpha_j$ to be satisfied.
 	Since $\gamma_i\mathcal{D}\alpha_i$ and $\gamma_i\mathcal{D}\beta_j$, there is a disk region $R$ whose boundary consists of subarcs of $\alpha_i$, $\alpha_j$, $\beta_i$, $\beta_j$ and $\gamma_i$ (See Figure \ref{region_1}).
	 \begin{figure}[h]
		\centering
			\includegraphics[scale=0.7]{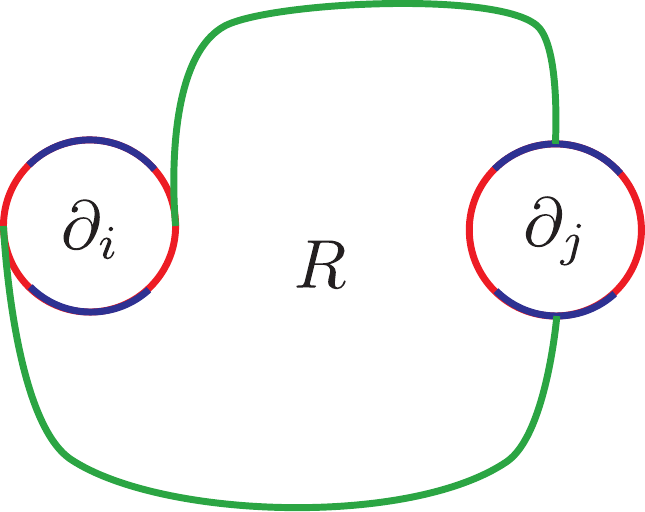}
			\caption
			{If $\gamma_i\mathcal{D}\alpha_i$ and $\gamma_i\mathcal{D}\beta_j$, there is a rectangular $R$ in this figure.}
			\label{region_1}
	\end{figure} 
	$\gamma_j$ have to intersects both $R$ and its exterior since $\gamma_j\mathcal{D}\alpha_j$.
	Since $\gamma_j$ does not intersects $\alpha_k$ and $\beta_i$ for $k\neq j$, $\partial R$ contains $\beta_i$, $\gamma_j$ have to intersects $\beta_i$. 
	Hence $\gamma_j\mathcal{D}\beta_i$ since $\gamma_j$ is good for $\beta$.
 \end{proof}

 \begin{prop}\label{prop_1}
 	Let $\mathcal{T}$ be a trisection of $X$ with $L_{X, \mathcal{T}}=2$, $(\Sigma; \alpha, \beta, \gamma)$ a trisection diagram of $\mathcal{T}$, $l$ a realization loop of $\mathcal{T}$.
	Suppose that $\Gamma_{\gamma}\cap l$ has two edges.
	Let $\gamma=(\gamma_1, \gamma_2, . . . . ,\gamma_{g-1},  \gamma_g)$, $\gamma'=(\gamma_1, \gamma_2, . . . . ,\gamma_{g-1}, \gamma_g')$ and $\gamma''=(\gamma_1, . . . . , \gamma_{g-2},  \gamma_{g-1}', \gamma_g')$ are vertices in $\Gamma_\gamma\cap l$ such that $\gamma '$ is length one from $\gamma$, $\gamma''$ is length one from $\gamma'$ and length two from $\gamma$. 
	Then $X$ is diffeomorphic to the connect-sum of copies of $\cong S^1\times S^3, S^2\times S^2,\pm\mathbb{C}P^2$ and $S^4$.
\end{prop}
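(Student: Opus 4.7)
The plan is to reduce the trisection $\mathcal{T}$ via slide equivalences to a standard form and then identify $X$ from the resulting Kirby diagram. First, I would apply Lemma \ref{simple} and the analogous statements obtained by interchanging the roles of $\alpha$ with $\beta$ and with $\gamma$, repeatedly, to split off every summand coming from a parallel pair of curves; these produce connect-sum factors of $S^4$ and $S^1\times S^3$. After these reductions, no two curves from different cut systems are parallel, so every good pair is entirely dual, we have $k_1=k_2=k_3=0$, and $(\Sigma;\alpha,\beta)$ is the standard Heegaard diagram of $S^3$ with $\alpha_i\mathcal{D}\beta_i$ after reordering.

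Next, using the good pair $(\beta,\gamma)$, I would reorder $\gamma$ so that $\gamma_i\mathcal{D}\beta_{\pi(i)}$ for a permutation $\pi$ of $\{1,\dots,g\}$. Since $\gamma=(\gamma_1,\dots,\gamma_g)$ and $\gamma''=(\gamma_1,\dots,\gamma_{g-2},\gamma'_{g-1},\gamma'_g)$ share their first $g-2$ components, and $\gamma''$ is good for $\alpha$ (being $\gamma_\alpha$ in the good pair $(\gamma_\alpha,\alpha_\gamma)=(\gamma'',\alpha)$), after reordering $\alpha$ (and $\beta$ in parallel) we may assume $\gamma_i\mathcal{D}\alpha_i$ and $\gamma_i\cap\alpha_j=\emptyset$ for $j\neq i$ whenever $i\le g-2$. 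Thus each of $\gamma_1,\dots,\gamma_{g-2}$ is in good position with respect to both $\alpha$ and $\beta$, and Lemma \ref{region} forces the restriction of $\pi$ to $\{1,\dots,g-2\}$ to be an involution, pairing each such index either with itself or with another index in $\{1,\dots,g\}$.

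By Lemma \ref{unknot} each $\gamma_i$ with $i\le g-2$ is an unknot. By Lemma \ref{framing} its framing equals $\pm 1$ when $\pi(i)=i$ and $0$ otherwise, and applying the same formula to band sums shows that the two members of each $\pi$-$2$-cycle inside $\{1,\dots,g-2\}$ form two $0$-framed unknots linked with linking number $\pm 1$ (linking $0$ is ruled out because $k_2=0$ leaves no $3$-handles to cap off an $\#^2 S^1\times S^2$ boundary). Because the intersection data confine $\gamma_i$ to the handles indexed by $\{i,\pi(i)\}$, the Kirby diagram genuinely decouples: each fixed point of $\pi$ in $\{1,\dots,g-2\}$ contributes a $\pm\mathbb{C}P^2$ summand, and each $2$-cycle contributes an $S^2\times S^2$ summand.

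The main obstacle is the last block formed by $\gamma_{g-1}$, $\gamma_g$ (together with any index in $\{1,\dots,g-2\}$ that $\pi$ pairs with $\{g-1,g\}$). Here $\gamma_{g-1},\gamma_g$ are good for $\beta$ but not automatically for $\alpha$, while their slide-replacements $\gamma'_{g-1},\gamma'_g$ inside $\gamma''$ are good for $\alpha$ but not for $\beta$; the type-$0$ edges provide only $\gamma_k\cap\gamma'_k=\emptyset$ for $k=g-1,g$. A short case analysis on how $\pi$ acts on $\{g-1,g\}$, combined with this disjointness and a second application of Lemma \ref{framing} on the resulting genus-at-most-$2$ sub-surface, shows that this final block contributes at most two further summands drawn from $\{S^2\times S^2,\pm\mathbb{C}P^2,S^1\times S^3,S^4\}$. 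Assembling these block contributions with the Lemma \ref{simple} factors, $X$ is a connect sum of copies of $S^1\times S^3$, $S^2\times S^2$, $\pm\mathbb{C}P^2$, and $S^4$, so $L_X=0$ by the classification of $L=0$ manifolds in \cite{KT}.
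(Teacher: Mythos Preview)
Your overall strategy is the same as the paper's: reduce via Lemma~\ref{simple} until every good pair is entirely dual, arrange $(\Sigma;\alpha,\beta)$ as the standard $S^3$ Heegaard diagram, and then read off the summands from the resulting Kirby diagram using Lemmas~\ref{unknot}, \ref{framing}, and \ref{region}. The organization via the permutation $\pi$ and the observation that Lemma~\ref{region} forces an involution on the indices that stay inside $\{1,\dots,g-2\}$ is a clean way to package the ``easy'' part of the argument, and it matches what the paper does implicitly.

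The genuine gap is the ``short case analysis'' for the final block. This is not short: it is the bulk of the proof. Two issues make your sketch incomplete. First, the claim that the remaining block lives on a sub-surface of genus at most $2$ is not justified. The curves $\gamma_{g-1},\gamma_g$ are good for $\beta$ but may hit many $\alpha$-handles, so they can cross every $\gamma_i$ with $i\le g-2$ at the handle $\pi(i)$; the decoupling of your ``nice'' $2$-cycles from the final block therefore requires the ``always upper / always lower hence separable'' arguments that the paper develops case by case, and these are not automatic. Second, when $\pi$ sends some $i\le g-2$ into $\{g-1,g\}$, Lemma~\ref{region} no longer applies (since $\gamma_{g-1},\gamma_g$ are not good for $\alpha$), and the final block genuinely involves three or four curves, not two. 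The paper resolves this by switching between the Kirby diagrams coming from $\gamma'$ and from $\gamma''$ and running three separate subcases according to which $\beta_k$ the curve $\gamma_g'$ meets dually ($k=g$, $k=g-1$, or $k\le g-2$), each of which needs its own separability and handle-slide argument to identify the Hopf-link or split-unknot pieces. Without carrying out this analysis, the proof is incomplete at exactly the point where the difficulty lies.
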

\begin{proof}
	We can assume that  $(\Sigma; \alpha, \beta)$ is a standard Heegaard diagram of $S^3$ by Lemma \ref{simple}.
	Hence we can suppose that $\alpha_i\mathcal{D}\beta_i$ for $i=1, . . . , g$.
	By Lemma \ref{simple}, we can assume that $\gamma_i\mathcal{D}\alpha_i$ for $i=1, . . . , g$.
	
	Suppose that one of $\gamma_i$, $\gamma_{g-1}'$ and $\gamma_g'$ is parallel to $\beta_j$.
	We say $\gamma_1$.
	Then we can take a regular neighborhood $T$ of $\alpha_j\cup \beta_j$ which contains $\gamma_1$.
	If $\gamma_i$, $\gamma_{g-1}'$ and $\gamma_g'$  other than $\gamma_1$ is contained in $T$, we perform handlesliding such components along $\gamma_1$.
	After that $T$ does not contain  $\gamma_i$, $\gamma_{g-1}'$ and $\gamma_g'$  other than $\gamma_1$.
	Hence there is an obvious $S^4$ summands in a trisection diagram, which we split off to reduce the genus and again proceed on the remainder.
	
	Let $L$ be a Kirby diagram obtained from $\gamma'$, $L'$ a Kirby diagram obtained from $\gamma''$.
	By Lemma \ref{unknot}, each of $\gamma_i$, $\gamma_{g-1}'$ and $\gamma_g'$ is an unknot component of $L$ and $L'$ for $i=1, . . . , g$ since $\gamma''$ is good for $\beta$.
	The framing coefficient of $\gamma_i$ is $0$ or $\pm 1$ by Lemma \ref{framing} for $i=1, . . . . , g-2$.
	From below, we divide the three cases where $\gamma_g'\mathcal{D}\beta_g$, $\gamma_g'\mathcal{D}\beta_{g-1}$ and $\gamma_g'\mathcal{D}\beta_i$ for $i=1, . . . . , g-2$ and consider each cases. 
	%$\gamma_i\mathcal {P}\beta_j$の場合は？
	
	First, we consider the case where $\gamma_g'\mathcal{D}\beta_g$.
	Since $\gamma_g\mathcal{D}\alpha_g$, the curves of $\gamma$ other than $\gamma_g$ does not intersects $\alpha_g$.
	Therefore $\gamma_g'$ is always lower than any components of $L$ since the curves of $\gamma'$ do not intersects $\alpha_g$.
	Then  $\gamma_g'$ is separable from any other components in $L$.
	 If $\beta_{g-1}\mathcal{D}\gamma_{g-1}'$,  $\gamma_{g-1}$ is also separable from any other components in $L$ since $\gamma_{g-1}$ is always upper than any other $\gamma_i$ and $\gamma_g'$ for $i=1, . . . , g-2$ in a diagram of $L$.
	 In this case, $\gamma_{g-1}\cup \gamma_g'$ is separable from any components of $L$ and it represents a connect-sum of $\pm\mathbb{C}P^2$s since $X$ is a closed 4-manifold.
	 Suppose $\beta_{g-1}\mathcal{D}\gamma_i$ for one of $i=1, . . . , g-2$.
	 Then  $\gamma_j\mathcal{D}\beta_i$ or $\gamma_{g-1}'\mathcal{D}\beta_i$ for $j\neq g, g-1, i$.
	 Suppose that  $\gamma_j\mathcal{D}\beta_i$.
	 By Lemma \ref{region}, $\gamma_i\mathcal{D}\beta_j$ since $\gamma_i$ and $\gamma_j$ is good for $\alpha$ and $\beta$. This is a contradiction.
	% \begin{figure}[H]
	%	\centering
	%		\includegraphics[scale=0.7]{3prop_1(2).eps}
	%		\caption
	%		{If $\beta_{g-1}\mathcal{D}\gamma_i$ and $\gamma_j\mathcal{D}\beta_i$, $\gamma_j$ have to intersect $\gamma_i$ for $\{i, j\}\subset \{1, . . . . , g-2\}$ since $\gamma_i$ and $\gamma_j$ is good for both $\alpha$ and $\beta$.}
	%		\label{3prop_1(2)}
	%\end{figure} 
	Hence $\beta_{i}\mathcal{D}\gamma_{g-1}'$. 
	Then $\gamma_{g-1}'\cup \gamma_i$ is the Hopf link in $L'$ since there is exactly one crossing such that $\gamma_i$ is lower than $\gamma_{g-1}'$.
	$\gamma_{g-1}'$ is always lower than $\gamma_j$ for $j\neq i, g, g-1$ and $\gamma_i$ does not make a crossing with $\gamma_j$  for $j\neq i, g, g-1$.
	Hence $\gamma_{g-1}'\cup \gamma_i$ is separable from the components of $L'$ except $\gamma_g'$.
	Now, $\gamma_{g}'$ does not intersects $\beta_i$ for $j=1, . . . , g-1$ since $\gamma_g'\mathcal{D}\beta_g$.
	Then $\gamma_g'\cup \gamma_{i}$ is the unlink since $\gamma_g$ is always lower than $\gamma_{i}$ in a diagram of $L'$.
	After handle sliding $\gamma_g'$ along $\gamma_i$, we can assume that each of $\gamma_{g-1}'\cup \gamma_i$ and $\gamma_g'$ is separable from each other since the framing coefficient of $\gamma_i$ is $0$.
	Hence $(\gamma_{g-1}'\cup \gamma_i)\cup \gamma_g'$ represents a connect-sum of $S^2\times S^2$ and $\pm\mathbb{C}P^2$ since $X$ is a closed 4-manifold.
	We note that remnants of $\gamma''$ is good for $\alpha$ and $\beta$.
	Hence remnants of $L'$ represents the 4-manifolds we want to obtain, we can complete the proof in this case.
	
	Next, we consider the case where $\gamma_g'\mathcal{D}\beta_{g-1}$.
	Then $\gamma_{g-1}\cup \gamma_g'$ is the Hopf link in $L$ since there is exactly one crossing of $\gamma_{g-1}\cup \gamma_g'$ such that $\gamma_{g-1}$ is lower.
	$\gamma_g'$ is also separable from $\gamma_i$ for $i=1, . . . , g-2$ in $L$, since $\gamma_g'$ is always lower than $\gamma_i$ for $i=1, . . . , g-2$ in diagram of $L$.
	Also, $\gamma_{g-1}$ is separable from $\gamma_i$ for $i=1, . . . , g-2$ in $L$, since $\gamma_{g-1}$ is always upper than $\gamma_i$ for $i=1, . . . , g-2$ in diagram of $L$.
	Hence $\gamma_{g-1}\cup \gamma_g'$ is separable from any other components in $L$.
	Since $\gamma_{g-1}\cup \gamma_g'$ is the Hopf link and $X$ is a closed 4-manifold, $\gamma_{g-1}\cup \gamma_g'$ represents a connect-sum $S^2\times S^2$ or $\pm\mathbb{C}P^2\#\pm\mathbb{C}P^2$.
	Since remnants of $\gamma'$ are good for $\alpha$ and $\beta$, $L$ represents the 4-manifolds we want to obtain.
	Then we can complete the proof in this case.
	
	Next, we consider the case where $\gamma_g'\mathcal{D}\beta_{i}$ for $i\in \{1, . . . , g-2\}$.
	If $\gamma_i\mathcal{D}\beta_j$ for $j=1, . . . , g-2$, $\gamma_j\mathcal{D}\beta_i$ by Lemma \ref{region}. This is a contradiction.
	Hence either $\gamma_i\mathcal{D}\beta_g$ or  $\gamma_i\mathcal{D}\beta_{g-1}$.
	This implies that $\gamma_g'\cup \gamma_i$ is separable from $\gamma_j$ for $j=1, . . . , g-2$ in $L$ and $L'$.
	Suppose $\gamma_{g-1}'\mathcal{D}\beta_g$. Then $\gamma_i\mathcal{D}\beta_{g-1}$.
	$\gamma_g'\cup \gamma_i$ is the Hopf link in $L$ since there is exactly one crossing of $\gamma_{g}'\cup \gamma_i$ such that $\gamma_{i}$ is lower.
	Since $\gamma_i\mathcal{D}\beta_{g-1}$, $\gamma_i$ is separable from $\gamma_j$ for $j=1, . . . , g-2$ in $L'$.
	$\gamma_g'$ is also separable from $\gamma_j$ for $j=1, . . . , g-2$ in $L'$ except $\gamma_i$.
	$\gamma_{g-1}'$ is separable from $\gamma_j$ for $j=1, . . . , g-2$ in $L'$ since $\gamma_{g-1}'$ is always lower than $\gamma_j$ for $j=1, . . . , g-2$ in diagram of $L'$.
	Particularly, $\gamma_{g-1}'$ is separable from $\gamma_i$ in a diagram of $L'$.
	Hence after handle sliding $\gamma_{g-1}'$ along $\gamma_i$, $\gamma_{g-1}'$ is separable from $\gamma_g'\cup \gamma_i$ since the framing coefficient of $\gamma_i$ is $0$.
	This implies that $(\gamma_g'\cup \gamma_i)\cup \gamma_{g-1}'$ represents a connect-sum of $S^2\times S^2$ and $\pm\mathbb{C}P^2$ since $X$ is a closed 4-manifold if  $\gamma_{g-1}'\mathcal{D}\beta_g$.
	Next, we suppose that $\gamma_{g-1}'\mathcal{D}\beta_{g-1}$.
	Then $\gamma_i\mathcal{D}\beta_g$.
	Then $\gamma_{g-1}$ is separable from $ \gamma_j$ for $j=1, . . . , g-2$ in $L$ since $\gamma_{g-1}'\mathcal{D}\beta_{g-1}$. %(\gamma_j は\beta_g-1と交わらない).
	Also $\gamma_{g-1}$ is separable from $\gamma_i\cup\gamma_g'$ since $\gamma_i\mathcal{D}\beta_g$.
	This implies that $(\gamma_i\cup\gamma_g')\cup \gamma_{g-1}$ in $L$ represents a connect-sum of $S^2\times S^2$ and $\pm\mathbb{C}P^2$ since $X$ is a closed 4-manifold if $\gamma_{g-1}'\mathcal{D}\beta_{g-1}$.
	Finally, we suppose that  $\gamma_{g-1}'\mathcal{D}\beta_j$ for  $j\neq g-1,  g, i$.
	If $\gamma_j\mathcal{D}\beta_k$ for $k=1, . . . , g-2$ except $i$, then $\gamma_k \mathcal{D} \beta_{j}$ by Lemma \ref{region}.
	This is a contradiction.
	Hence $\gamma_j\mathcal{D}\beta_g$ or $\gamma_j\mathcal{D}\beta_{g-1}$.
	See Figure \ref{3prop_1(1)} as the figure of the case where $\gamma_j\mathcal{D}\beta_{g-1}$.
	\begin{figure}[h]
		\centering
			\includegraphics[scale=0.7]{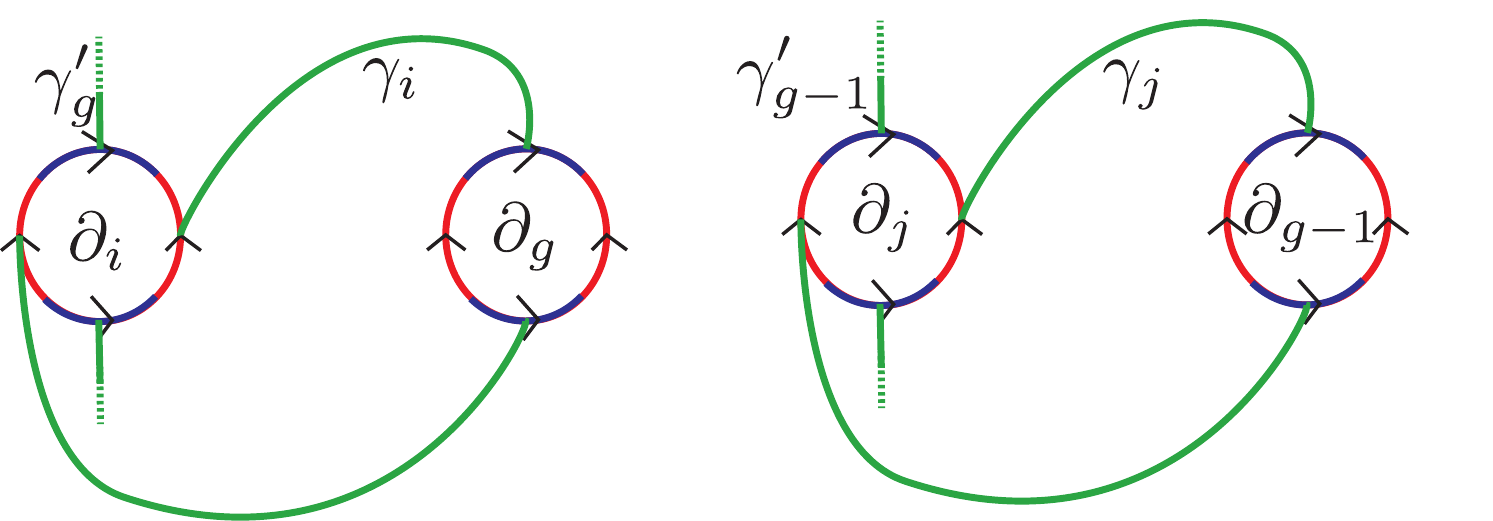}
			\caption
			{If $\gamma_j\mathcal{D}\beta_{g-1}$, $\gamma_i\mathcal{D}\beta_g$ since $\gamma_i$ and $\gamma_j$ is good for $\beta$. This figure is the case where $\gamma_j\mathcal{D}\beta_{g-1}$.}
			\label{3prop_1(1)}
	\end{figure}
	Since there is exactly one crossing made by $\gamma_j$ and $\gamma_{g-1}'$ whose upper arc is $\gamma_{g-1}'$, $\gamma_j\cup \gamma_{g-1}'$ is the Hopf link.
	Same as above $\gamma_i\cup \gamma_g'$ is the Hopf link.
	Suppose $i=1$, $j=2$.
	Since $\gamma_1$ and $\gamma_2$ does not intersects $\beta_1$ and $\beta_2$ respectively, the framing coefficient of $\gamma_1$ is $0$.
	$\gamma_g'$ and  $\gamma_{g-1}'$ are separable from $\gamma_i$ other than $i=1, 2$.
	$\gamma_1\cup\gamma_2$ is an unlink since $\gamma_1$ and $\gamma_2$ does not make a crossing in the diagram of $L'$.
	Hence $(\gamma_j\cup \gamma_{g-1}')\cup (\gamma_i\cup \gamma_g')$ is separable from any other components in $L'$.
	In $(\gamma_j\cup \gamma_{g-1}')\cup (\gamma_i\cup \gamma_g')$, after handlesliding $\gamma_g'$ along $\gamma_1$, $\gamma_i\cup \gamma_g'$ is separable from $\gamma_j\cup \gamma_{g-1}'$ since the framing of $\gamma_1$ is $0$.
	Hence  $(\gamma_j\cup \gamma_{g-1}')\cup (\gamma_i\cup \gamma_g')$ represents a connect-sum of copies of $S^2\times S^2$ and $\pm\mathbb{C}P^2$ since $X$ is a closed 4-manifold.
	Since remnants of $\gamma''$ are good for $\alpha$ and $\beta$, $L'$ represents the 4-manifolds we want to obtain. Then we can complete the proof.

	\end{proof}

	 \begin{prop}\label{prop_2}
 	Let $\mathcal{T}$ be a genus $g$ trisection of $X$ with length 2 and $l$ a realization loop of $\mathcal{T}$.
	Suppose that $\Gamma_\gamma\cap l$ consists of two edges and $\gamma$, $\gamma'$ and $\gamma''$ vertices in $\Gamma_\gamma\cap l$.
	If $\gamma=(\gamma_1, . . . , \gamma_g)$, $\gamma'=(\gamma_1, . . . , \gamma_g')$ and $\gamma''=(\gamma_1, . . . , \gamma_g'')$, then  $X$ is diffeomorphic to connect-sum of copies of $\cong S^1\times S^3, S^2\times S^2,\pm\mathbb{C}P^2$ and $S^4$.
 \end{prop}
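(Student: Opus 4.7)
The strategy closely mirrors the proof of Proposition \ref{prop_1}, but the new feature is that both handle-slides modify the same component $\gamma_g$, so the geometric interaction must be analyzed between $\gamma_g'$ and $\gamma_g''$ rather than between two different replaced curves. I would begin by applying Lemma \ref{simple} to arrange that $(\Sigma;\alpha,\beta)$ is the standard genus $g$ Heegaard diagram of $S^3$, and then use that $\alpha$ is good for both $\beta$ and $\gamma$ (since the realization loop avoids $\Gamma_\alpha$) to order curves so that $\alpha_i\mathcal{D}\beta_i$ and $\alpha_i\mathcal{D}\gamma_i$ for every $i$. Form the Kirby diagrams $L$ and $L'$ from $\gamma'$ and $\gamma''$ respectively. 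As before, Lemma \ref{unknot} implies each $\gamma_i$ ($i<g$), together with $\gamma_g'$ and $\gamma_g''$, is an unknot component of whichever diagram it belongs to, and Lemma \ref{framing} pins down the framings of $\gamma_1,\ldots,\gamma_{g-1}$ as $0$ or $\pm 1$.

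Next, I would split into cases according to the index $j$ with $\gamma_g'\mathcal{D}\beta_j$ and the index $k$ with $\gamma_g''\mathcal{D}\beta_k$. Note $j\neq g$ and $k\neq g$ are possible only when some other $\gamma_i$ simultaneously redirects its intersection onto $\beta_g$, and Lemma \ref{region} constrains this: if $\gamma_i\mathcal{D}\beta_m$ with $i,m\le g-1$, then $\gamma_m\mathcal{D}\beta_i$, giving a contradiction unless the indices line up correctly. So the allowed patterns are essentially (a) $\gamma_g'\mathcal{D}\beta_g$ and $\gamma_g''\mathcal{D}\beta_g$; (b) one of them equals $\beta_g$ and the other equals some $\beta_j$ with $j<g$, in which case the corresponding $\gamma_j$ must satisfy $\gamma_j\mathcal{D}\beta_g$; and (c) both $\gamma_g'$ and $\gamma_g''$ meet some $\beta_j$ and $\beta_k$ with $j,k<g$, in which case $\gamma_j$ and $\gamma_k$ are forced by Lemma \ref{region} to meet $\beta_g$. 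In each case I would show that $\gamma_g'$ (resp.\ $\gamma_g''$) together with at most one $\gamma_i$ forms either an unknot with framing $0$ or $\pm 1$, separable from the other components, or forms a Hopf link separable from the rest of $L$ (resp.\ $L'$).

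To conclude, I would argue that the remaining components of $L$ and $L'$ are good for $\alpha$ and $\beta$ in the sense of the proof of Proposition \ref{prop_1}, so by Lemma \ref{unknot} and Lemma \ref{framing} each of them is an unknot of framing $0$ or $\pm 1$, and they can be split off as $S^1\times S^3$, $S^2\times S^2$, or $\pm\mathbb{C}P^2$ summands (or $S^4$ after handle-slides producing obvious cancellations, as in the opening reduction of Proposition \ref{prop_1}). Combining the at-most-two ``new'' summands coming from the isolated $\gamma_g'$ or Hopf-link pieces with the summands from the remaining components, $X$ is diffeomorphic to a connect-sum of copies of $S^1\times S^3,\ S^2\times S^2,\ \pm\mathbb{C}P^2$, and $S^4$, as claimed.

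The main obstacle, I expect, is case (c): when $\gamma_g'$ and $\gamma_g''$ both land on $\beta$-curves with index less than $g$, I must use Lemma \ref{region} to force each of the corresponding $\gamma_j,\gamma_k$ to intersect $\beta_g$, then verify that a single handle-slide (along the $\gamma_i$ of zero framing, as in the last case of Proposition \ref{prop_1}) really does separate the two Hopf-link pieces simultaneously, without creating new unwanted crossings between $\gamma_g'$ and $\gamma_g''$. This requires a careful check that $\gamma_g'$ and $\gamma_g''$ are disjoint on $\Sigma$ (they are, because $\gamma'$ and $\gamma''$ are connected by a type 0 edge) and that their projections to the Kirby diagram do not interlock through the surviving components.
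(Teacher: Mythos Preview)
Your case split has a genuine gap. You propose to split according to ``the index $j$ with $\gamma_g'\mathcal{D}\beta_j$ and the index $k$ with $\gamma_g''\mathcal{D}\beta_k$,'' but there is no reason for such a $j$ to exist. Recall the setup: $\gamma=\gamma_\alpha$ is good for $\alpha$ and $\gamma''=\gamma_\beta$ is good for $\beta$, while $\gamma'$ is merely an intermediate vertex of $\Gamma_\gamma$ joined to each by a type~0 edge. Nothing forces $\gamma_g'$ to be in good position with $\beta$; it could meet several $\beta_j$ in many points, or be parallel to one of them. Your cases (a)--(c) therefore do not exhaust the possibilities, and the subsequent Hopf-link/separation arguments for $\gamma_g'$ are unsupported. (Incidentally, the edges in $\Gamma_\gamma$ are type~0 moves, not handle slides; this is minor but worth keeping straight.)

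The paper sidesteps this entirely by ignoring $\gamma'$ and working only with the Kirby diagram $L$ coming from $\gamma''$. Since $\gamma''=(\gamma_1,\ldots,\gamma_{g-1},\gamma_g'')$ is good for $\beta$ and $\gamma_1,\ldots,\gamma_{g-1}$ are good for $\alpha$ as well, only \emph{two} cases arise: either $\gamma_g''\mathcal{D}\beta_g$, in which case $\gamma_g''$ is lower than every other component and splits off as a $\pm\mathbb{C}P^2$ summand; or $\gamma_g''\mathcal{D}\beta_i$ for some $i<g$, whence Lemma~\ref{region} forces $\gamma_i\mathcal{D}\beta_g$ and $\gamma_g''\cup\gamma_i$ is a separable Hopf link. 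The intermediate curve $\gamma_g'$ never enters the argument, so the interaction you worry about in case~(c) --- separating two Hopf-link pieces and controlling crossings between $\gamma_g'$ and $\gamma_g''$ --- simply does not arise. Once you drop $\gamma'$ from the analysis your outline collapses to the paper's two-case proof.
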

	\begin{proof}
		We can assume that  $(\Sigma; \alpha, \beta)$ is a standard Heegaard diagram of $S^3$ by Lemma \ref{simple}.
		Hence we can suppose that $\alpha_i\mathcal{D}\beta_i$ for $i=1, . . . , g$.
		By Lemma \ref{simple}, we can assume that $\gamma_i\mathcal{D}\alpha_i$ for $i=1, . . . , g$.
		Let $L$ be a Kirby diagram obtained from $\gamma''$.
		By Lemma \ref{unknot}, each of $\gamma_i$ for $i=1, . . . . , g-1$ and $\gamma_g''$ is an unknot in $L$.
		
		Suppose that $\gamma_g''\mathcal{D}\beta_g$.
		Then $\gamma_g''$ is always lower than $\gamma_i$ for $i=1, . . . , g-1$ since $\gamma_g''$ does not intersects $\beta_i$ other than $\beta_g$.
		Hence $\gamma_g''$ is separable from $\gamma_i$ for $i=1, . . . , g-1$ in $L$.
		In this case, $\gamma_g''$ represents a connected summand of $\pm \mathbb{C}P^2$ of $X$.
		Since remnants of $\gamma''$ represent the 4-manifolds we want to obtain, we can complete the proof.
		
		Next, we suppose that $\gamma_g''\mathcal{D}\beta_i$ for $i=1, . . . , g-1$.
		If $\gamma_i\mathcal{D}\beta_j$ for $j\neq g, i$, then $\gamma_j\mathcal{D}\beta_i$ by Lemma \ref{region}.
		This is a contradiction.
		Hence $\gamma_i\mathcal{D}\beta_g$.
		Since $\gamma_i$ does not have the crossing with $\gamma_j$ for $j\neq g, i$, $\gamma_i$ is separable from $\gamma_j$ in $L$.
		Also, since $\gamma_g'$ is alway lower than $\gamma_j$ for $j\neq g, i$, $\gamma_g'$ is separable from $\gamma_j$ in $L$.
		Hence $\gamma_g'\cup \gamma_i$ is separable from any other components of $L$.
		Since there is exactly one crossing made by $\gamma_g'$ and $\gamma_i$ such that $\gamma_g'$ is upper, $\gamma_g'\cup \gamma_i$ is the Hopf link in $L$.
		Therefore $\gamma_g'\cup \gamma_i$ represent a connect-sum of copies of $S^2\times S^2$ or $\pm\mathbb{C}P^2$ since $X$ is a closed 4-manifold.
		Since remnants of $\gamma''$ are good for $\alpha$ and $\beta$, $L$ represents the 4-manifolds we want to obtain. Then we can complete the proof.
	\end{proof}
%------------------------------------------------------------------------------------------------------------------------------------------------------------------------------------------------------------------------------------------------------------------------------------------------------------------------------------------------------------------------------------------------
\subsection{The case where both $\Gamma_{\gamma}\cap l$ and $\Gamma_{\beta}\cap l$ has exactly one edge.}
Let $\mathcal{T}$ be a trisection with length 2 and $l$ a realization loop of $\mathcal{T}$.
Also, let $(\Sigma, \alpha, \beta, \gamma)$ be a trisection diagram of $\mathcal{T}$.
Now we can assume that $\alpha$ is good for $\gamma$ and $\beta$.
 
In this subsection, we assume that lengths of trisections are two, and both  $\Gamma_{\gamma}\cap l$ and  $\Gamma_{\beta}\cap l$ have exactly one edge.
Then there are vertices $\gamma'=(\gamma_1, \gamma_2, . . . . ,  \gamma_g')$ and $\beta'=(\beta_1,\beta_2, . . . . , \beta_g')$ which are length one from $\gamma$ and $\beta$ respectively.  
\begin{lem}\label{4.3}
	Let $X$ be a 4-manifolds with trisection $\mathcal{T}$.
	If one of the curves $\gamma_i$ and $\gamma_g'$ in $\gamma'$ is parallel to one of the curves of $\beta'$, then  $X$ is diffeomorphic to connect-sum of copies of $\cong S^1\times S^3, S^2\times S^2,\pm\mathbb{C}P^2$ and $S^4$.
\end{lem}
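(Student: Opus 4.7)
The plan is to adapt the argument of Lemma \ref{simple} to this three-way setting. Assume for definiteness that $\gamma_1 \mathcal{P} \beta_1'$; the other sub-cases (involving $\gamma_g'$, $\beta_g'$, or different indices) follow from the same reasoning after relabelling.

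First I would exploit the fact that $\alpha$ is good for both $\beta'$ and $\gamma'$ to locate a single curve of $\alpha$ that is paired in good position with both $\gamma_1$ and $\beta_1'$. Because parallel essential simple closed curves on $\Sigma$ are isotopic, they share all geometric intersection data with every other curve, so the good partner of $\gamma_1$ in $\alpha$ must coincide with the good partner of $\beta_1'$ in $\alpha$; relabel this common partner as $\alpha_1$. I would then split into two sub-cases according to whether $\alpha_1 \mathcal{P} \gamma_1$ or $\alpha_1 \mathcal{D} \gamma_1$. In the parallel sub-case, $\alpha_1$, $\beta_1'$, and $\gamma_1$ are mutually parallel and Lemma \ref{simple} applies verbatim. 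In the transverse sub-case I would take the regular neighborhood $N = N(\alpha_1 \cup \gamma_1)$, a once-punctured torus on $\Sigma$ containing (a copy of) $\beta_1'$, and handle-slide any curve of $\beta'$ or $\gamma'$ other than $\beta_1'$ and $\gamma_1$ that enters $N$ along $\beta_1'$ or $\gamma_1$ respectively, precisely as in the proof of Lemma \ref{simple}. The neighborhood $N$ then exhibits a genus-one trisection summand whose curve configuration — two parallel curves with a common transverse partner — makes it a diagram of $S^4$ or $S^1 \times S^3$.

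After splitting off this summand, the residual trisection on $\Sigma \setminus N$ has strictly smaller genus and inherits a realization loop of length at most $2$. If the length drops to $0$ or $1$, the conclusion follows from the results of Kirby and Thompson cited in the introduction; if it remains $2$, the residual diagram falls into one of the cases already treated by Propositions \ref{prop_1} and \ref{prop_2}, or is handled by a further application of the present lemma at a smaller genus. Iterating the reduction finishes the argument by induction on $g$. The main technical obstacle I anticipate is keeping careful track of how the handle slides needed to clear $N$ of foreign curves affect the goodness of $\alpha$ with respect to the residual $\beta'$ and $\gamma'$, so that the smaller diagram is genuinely in the scope of the existing propositions and the induction closes.
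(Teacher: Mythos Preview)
Your reduction to Lemma~\ref{simple} has a genuine gap hidden in the sentence ``the other sub-cases (involving $\gamma_g'$, $\beta_g'$, or different indices) follow from the same reasoning after relabelling.'' They do not. The hypothesis available here is that $\alpha$ is good for $\beta$ and for $\gamma$, \emph{not} for $\beta'$ and $\gamma'$ as you assert. The curves $\gamma_1,\dots,\gamma_{g-1}$ and $\beta_1,\dots,\beta_{g-1}$ lie in $\gamma$ and $\beta$ respectively and therefore do have well-defined $\alpha$-partners, so your argument goes through when the parallel pair is $\gamma_i\mathcal{P}\beta_j$ with $i,j\le g-1$ (and indeed this is how the paper handles that sub-case). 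But the new curves $\gamma_g'$ and $\beta_g'$ are only known to be disjoint from $\gamma_g$ and $\beta_g$; they need not be in good position with $\alpha$ at all, so there is no $\alpha$-partner to build the once-punctured torus $N$ around. In particular, in the sub-cases $\gamma_i\mathcal{P}\beta_g'$ (with $i\le g-1$) and $\gamma_g'\mathcal{P}\beta_g'$ your construction of $N$ and the subsequent slide argument simply do not start.

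The paper treats exactly these two remaining sub-cases by a different method: it passes to the Kirby diagram induced from $(\Sigma,\alpha,\beta,\gamma')$ with $(\Sigma,\alpha,\beta)$ standard, and uses the fact that a curve parallel to $\beta_g'$ is disjoint from every $\beta_j$ to show (via Lemma~\ref{unknot} and the over/under rule of Section~3) that the relevant components are separable unknots. This is not a cosmetic variation of the Lemma~\ref{simple} argument; it replaces the missing $\alpha$-partner by a crossing-level separation in the link diagram. A second, smaller issue: your inductive appeal to Propositions~\ref{prop_1} and~\ref{prop_2} is misplaced, since those propositions concern the case where $\Gamma_\gamma\cap l$ has two edges, whereas the residual here remains in the one-edge/one-edge configuration; and when you split off a summand using $\beta_g'\in\beta'$ rather than a curve of $\beta$, it is not clear that the residual inherits a realization loop of length at most $2$, so the induction does not close as stated.
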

\begin{proof}
	By Lemma \ref{simple}, we can assume that $(\Sigma, \alpha, \beta)$ is the standard Heegaard diagram of $S^3$.
	Hence we can suppose that $\alpha_i\mathcal{D}\beta_i$ for $i=1, . . . , g$.
	Suppose that one of $\gamma_i$ and $\gamma_g'$ is parallel to $\beta_j$ for $i=1, . . . , g-1$ and $j=1, . . . , g-1$.
	We say $\gamma_1$.
	Then we can take a tubular neighborhood $T$ of $\alpha_j\cup \beta_j$ which contains $\gamma_1$.
	$\gamma_g$ is not contained in $T$ since it is good for $\alpha$ and does not intersect $\gamma_1$.
	If $\gamma_g'$  is contained in $T$, we perform handlesliding such components along $\gamma_1$.
	After that $T$ does not contain  $\gamma_g'$.
	Hence there is an obvious $S^4$ summands in a trisection diagram, which we can split off to reduce the genus and again proceed on the remainder.
	
	Suppose that one of the curves of $\gamma'$ other than $\gamma_g'$ is parallel to $\beta_g'$.
	We say $\gamma_1$.
	Let $L$ be a Kirby diagram obtained from $(\Sigma, \alpha, \beta, \gamma')$.
	Since $\beta_g'$ does not intersect $\beta_1, . . . , \beta_g$, $\gamma_1$ does not intersect  $\beta_1, . . . , \beta_g$.
	This implies that $\gamma_1$ is an unknot component of $L$. 
	Suppose that $\gamma_1\mathcal{D}\alpha_i$ for one of $i=1, . . . , g$.
	We consider the case where $i=g$.
	We can assume  that $\gamma_g'\mathcal{D} \beta_j$ for one of $j=1, . . . , g-1$ since $\gamma_g'$ is good for $\beta'$.
	We note that $j\neq i (=g)$.
	There exists the curve $\gamma_k$ which intersects each of $\alpha_j$ and $\beta_l$ exactly once for one of  $k \neq 1, g$ and $ l\neq g$ since $\gamma$ is good for $\alpha$.
	Then there is the curve $\gamma_m$ of $\gamma'$ for one of $m\neq 1, k, g$ which satisfies $\gamma_m\mathcal{D}\alpha_l$ and it intersects $\beta_i$ or $\beta_j$.
	This contradicts that $\gamma'$ is good for $\beta'$. See Figure \ref{Lem4.3_s1}.
	\begin{figure}[h]
		\centering
			\includegraphics[scale=0.7]{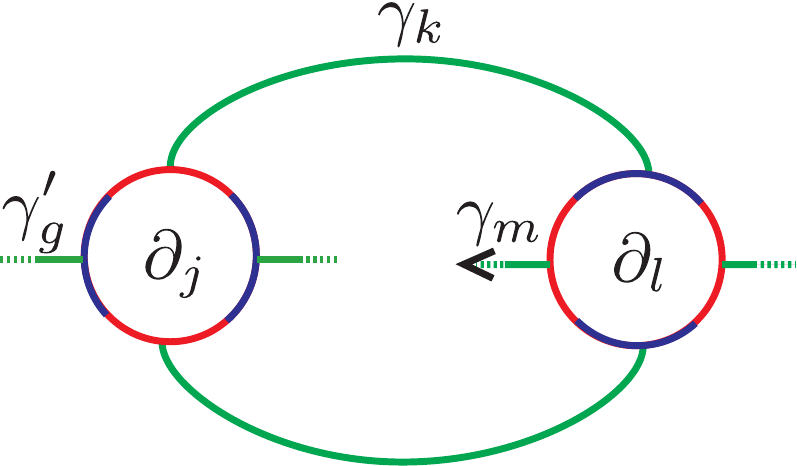}
			\caption
			{If $i=g$, one of the curves of $\alpha$ intersects curves of $\gamma$ twice other than $\gamma_g$.}
			\label{Lem4.3_s1}
	\end{figure}
	Next, we consider the case where $i=1, . . . , g-1$.
	If $\gamma_j\mathcal{D}\beta_i$ for one of $j\neq 1$, $\gamma_1$ have to intersects $\beta_k$ for one of $k=1, . . . , g$ since $\gamma_1$ is good for $\alpha$.
	This is a contradiction.
	This implies that $\gamma_g'\mathcal{D}\beta_i$. 
	Now, $\gamma_g'$ is always lower than any other components of $\gamma'$ other than $\gamma_1$.
	Also, $\gamma_1$ is always lower than any other components of $\gamma'$.
	Hence $\gamma_g'\cup \gamma_1$ is two components unlink in $L$ separable from any other components.
	Since remnants of $\gamma'$ are good for $\alpha$ and $\beta$, $L$ represents the 4-manifolds we want to obtain.

	Next, we consider the case where $\gamma_g'$ is parallel to $\beta_g'$.
	Since $\beta_g'$ does not intersects $\beta_1, . . . , \beta_g$, $\gamma_g'$ does not intersect $\beta_1, . . . , \beta_g$.
	This implies that $\gamma_g'$ is an unknot component of $L$ and separable from any other components of $L$.
	Suppose $\gamma_g\mathcal{D}\alpha_i$ for one of  $i=1, . . . , g$.
	Then, since $\gamma_1, . . . , \gamma_{g-1}$ are good for $\alpha$ and $\beta'$, $\gamma_i$ does not intersects $\alpha_i$ for $i=1, . . . , g-1$.
	Let $\Sigma'=\Sigma-N(\alpha_i)\cup D^2\cup D^2$ be a cupping off $\Sigma-N(\alpha_i)$ by two disks and $\alpha'=\alpha-\alpha_i$, $\beta'=\beta-\beta_i$ and $\gamma'=(\gamma_1, . . . , \gamma_{g-1})$.
	Then $(\Sigma', \alpha', \beta', \gamma')$ is a trisection diagram of length 0 trisection.
	This implies that $L$ represents the 4-manifolds we want to obtain.
	 %$\gamma_1, . . . , \gamma_{g-1}$はそれぞれ$\alpha_i$と一回だけ交わる
\end{proof}

Only remains are the cases where  $\gamma_i\mathcal{D} \beta_g'$ for one of  $i=1, . . . , g-1$ and $\gamma_g'\mathcal{D} \beta_g'$.
First, we consider the case where $\gamma_i\mathcal{D} \beta_g'$ for one of $i=1, . . . , g-1$. We can assume that the curves of $\gamma'$ other than $\gamma_i$ intersects one of the curves of $\beta'$ exactly once by Lemma \ref{4.3}.

\begin{lem}\label{lem4}
	Let $X$ be a 4-manifolds with trisection $\mathcal{T}$.
	If  $\gamma_i \mathcal{D} \beta_g'$ for one of $i\in \{1, . . . , g-1\}$, then $X$ is diffeomorphic to connect-sum of copies of $\cong S^1\times S^3, S^2\times S^2,\pm\mathbb{C}P^2$ and $S^4$.
\end{lem}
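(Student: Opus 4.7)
The plan is to follow the Kirby-diagram case analysis of Propositions \ref{prop_1}, \ref{prop_2} and Lemma \ref{4.3}: build the Kirby diagram $L$ from $(\Sigma,\alpha,\beta,\gamma')$ with $(\Sigma,\alpha,\beta)$ the standard Heegaard diagram of $S^3$ (possible by Lemma \ref{simple}), use Lemmas \ref{unknot} and \ref{framing} to produce a $0$-framed unknot component, pair it with a neighbor to form a Hopf link summand that separates off, and reduce to a length-$0$ trisection, which by \cite{KT} is a connect sum of $S^1\times S^3$, $S^2\times S^2$, $\pm\mathbb{C}P^2$ and $S^4$.

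After reindexing I may assume $\gamma_k\mathcal{D}\alpha_k$ for every $k$ and $\gamma_1\mathcal{D}\beta_g'$. By Lemma \ref{4.3} the remaining curves $\gamma_2,\ldots,\gamma_{g-1},\gamma_g'$ are each in $\mathcal{D}$-position with one of $\beta_1,\ldots,\beta_{g-1}$; write $\gamma_g'\mathcal{D}\beta_m$. Because $\beta$ and $\beta'$ are joined by a type-$0$ edge sharing $\beta_1,\ldots,\beta_{g-1}$, the curve $\beta_g'$ is disjoint from every $\beta_j$, and the good position of $\gamma_1$ with $\beta'$ then forces $\gamma_1\cap\beta_j=\emptyset$ for all $j<g$. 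Together with $\gamma_1\cap\alpha=\gamma_1\cap\alpha_1$, Lemma \ref{unknot} makes $\gamma_1$ an unknot component of $L$, and Lemma \ref{framing} evaluates its framing as $-\sum_j i(\gamma_1,\alpha_j)i(\gamma_1,\beta_j)=0$.

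I split into two subcases on $m$. If $m=1$, the only crossing involving both $\gamma_1$ and $\gamma_g'$ is the single one at the $(\alpha_1,\beta_1)$-handle, with $\gamma_1$ the lower arc, while every other $\gamma_k$ misses both $\beta_1$ and $\beta_g'$; thus $\gamma_1\cup\gamma_g'$ is a Hopf link separable from the rest of $L$, and since $X$ is closed it contributes a summand $S^2\times S^2$ or $\pm\mathbb{C}P^2\#\pm\mathbb{C}P^2$. If $m\neq 1$, the unique $\gamma_k$ paired with $\beta_1$ plays the role of $\gamma_g'$ above: the separable Hopf link is $\gamma_1\cup\gamma_k$, and a handle slide of $\gamma_g'$ along the $0$-framed unknot $\gamma_1$, in the style of the endgame of Proposition \ref{prop_1}, detaches $\gamma_g'$ from $\gamma_1\cup\gamma_k$, giving the same type of summand.

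After stripping the Hopf link summand, each remaining $\gamma$-component is in good position with both an $\alpha$- and a $\beta$-curve, so the residual trisection has length $0$, and \cite{KT} identifies its $4$-manifold as the desired connect sum. The main obstacle is the handle-slide argument in the subcase $m\neq 1$: the unknown intersections $i(\gamma_k,\beta_g)$ and $i(\gamma_g',\alpha_g)$ can produce extra crossings at the $g$-th handle, and verifying that slides along the $0$-framed unknot $\gamma_1$ cancel them is the principal technical check, modeled on the proof of Proposition \ref{prop_1}.
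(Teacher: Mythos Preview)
There are two genuine gaps. First, the reindexing ``$\gamma_k\mathcal{D}\alpha_k$ for every $k$'' cannot in general be arranged while keeping both replaced curves at index $g$: the index $g$ of $\gamma$ is pinned by $\gamma\cap\gamma'$, and the index $g$ of $\beta$ (hence of $\alpha$, via the standard diagram $\alpha_i\mathcal{D}\beta_i$) is pinned by $\beta\cap\beta'$, so demanding $\gamma_g\mathcal{D}\alpha_g$ is an extra hypothesis you have not justified. The paper does not reindex this way; it sets $\gamma_1\mathcal{D}\beta_g'$, lets $\gamma_1\mathcal{D}\alpha_i$ with $i$ free, and treats $i=g$ and $i<g$ separately. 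Your write-up effectively assumes $i<g$ throughout and never addresses $i=g$.

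Second, and more damaging, your crossing count forgets that $\gamma_g'$ is \emph{not} good for $\alpha$ (only $\gamma$ is). In the diagram built from $\gamma'$ the curve $\gamma_g'$ may meet several $\alpha_j$'s, and each such intersection produces crossings with whichever $\gamma_k$ is paired with $\beta_j$; thus $\gamma_1\cup\gamma_g'$ need not be separable from the rest even when $m=1$. The uncontrolled intersections $\gamma_1\cap\beta_g$ and $\gamma_g'\cap\alpha_g$ also add crossings at the $g$-th handle between $\gamma_1$ and $\gamma_g'$ themselves, so the pair need not be a Hopf link. You acknowledge this obstacle for $m\neq1$, but it is already present for $m=1$. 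The paper avoids both problems by switching between the Kirby diagrams coming from $\gamma$ and from $\gamma'$ as convenient, and by using the region argument of Lemma~\ref{region} (and the configuration in Figure~\ref{Lem4.4_s1}) to force the $\alpha$-partner of the relevant curve to have index $g$, thereby ruling out the bad configurations outright rather than trying to slide them away.
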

\begin{proof}
	By Lemma \ref{simple}, we can assume that $(\Sigma, \alpha, \beta)$ is the standard Heegaard diagram of $S^3$.
	Let $L$ and $L'$ be a Kirby diagram obtained from $\gamma$ and $\gamma'$ respectively.
	We can assume that   $\gamma_1\mathcal{D} \beta_g'$ without loss of generality.
	%Suppose  $\gamma_i$ intersects $\alpha_g$ exactly once for $i\neq 1, g$ and $\beta_j$ for one of $j=1, . . . , g-1$.
	%Then there is a curve of $\gamma'$ which intersects $\alpha_j$ exactly once and it intersects  
	Since $\gamma$ is good for $\alpha$, there is a curve $\alpha_i$ such that $\gamma_1\mathcal{D}\alpha_i$.
	If $i=g$, $\gamma_1$ is always lower than any other components of $L$ since $\gamma_1$ does not intersect $\beta_i$ for $i=1, . . . , g-1$. % the curves of $\gamma$ other than $\gamma_1$ does not intersects $\alpha_g$.
	Hence $\gamma_1$ is separable from any other components of $L$.
	Remnants of $\gamma$ is good for $\alpha-\alpha_g$ and $\beta-\beta_g$ and does not intersect $\alpha_g$.
	We note that the arcs that intersect $\beta_g$ do not make a crossing of $L-\gamma_1$.
	Hence remnants of $\gamma$ represent the 4-manifolds we want to obtain. 
	We can complete the proof in this case.
	
	If $i\in \{1, . . . , g-1\}$, there is a curve $\gamma_j$ or $\gamma_g'$ that intersects $\beta_i$ exactly once.
	
	Suppose that $\gamma_g'\mathcal{D}\beta_i$.
	Then $\gamma_g'\cup \gamma_1$ is the Hopf link in $L'$ that is separable from any other components of $L'$.
	Let $\gamma_k$ be a curve that intersects $\alpha_g$ exactly once.
	Then $\gamma_k$ intersects $\beta_m$ for $m\neq i, g$ exactly once.
	If $\gamma_g$ intersects  $\alpha_m$ exactly once, $\gamma_k$ is separable from any other components of $L'$.
	Suppose that $\gamma_m$ intersects $\alpha_m$ for $m\neq 1, k, g$.
	Then $\gamma_m$ intersects $\beta_l$ exactly once for $l\neq i, m, g$.
	If $\alpha_l\mathcal{D} \gamma_l$ for $l\neq g$, this contradicts Lemma \ref{region}.
	Hence $\alpha_l\mathcal{D}\gamma_g$.
	This implies that $\gamma_k\cup \gamma_m$ is the Hopf link in $L'$ that is separable from any other components. 
	Since remnants of $\gamma'$ are good for $\alpha$ and $\beta$, $L'$ represents the 4-manifolds we want to obtain. Then we can complete the proof in this case.
	
	We consider the case where $\gamma_j\mathcal{D}\beta_i$ for one of $j=2, . . . , g-1$.
	Then $\gamma_j\mathcal{D}\alpha_k$ for $k\neq i, j$.
	Suppose that $k=g$.
	Same as above, $\gamma_j\cup \gamma_1$ is a Hopf link in $L$ which is separable from any other components.
       	$\gamma_g'$ intersects one of the curves of $\beta'$ exactly once.
	We say $\beta_m$.
	Then there is a curve of $\gamma'$ which intersects $\alpha_m$ exactly once.
	If such curve intersects $\beta_l$ exactly once, $\gamma_g\mathcal{D}\alpha_l$ by Lemma \ref{region}.
	The union of the such curve and $\gamma_g$ is a Hopf link in $L$ which is separable from any other components.
	Since remnants of $\gamma$ are good for $\alpha$ and $\beta$, $L$ represents the 4-manifolds we want to obtain.
	Next, we suppose that $k\neq g$.
	Then $\gamma_1$ have to intersects $\beta_k$ since $\gamma_1$ is good for $\alpha$.
	This contradicts that $\gamma_1$ is good for $\beta'$. (See Figure \ref{Lem4.4_s1}). 
	%Then there is a curve of  $\gamma'$ which intersects  $\beta_k$ exactly once.
	%If such curve is $\gamma_l$ for one of $i\neq 1, j$, it intersects $\alpha_i$ or $\beta_k$ twice (See Figure \ref{Lem4.4_s1}). 
	\begin{figure}[h]
		\centering
			\includegraphics[scale=0.7]{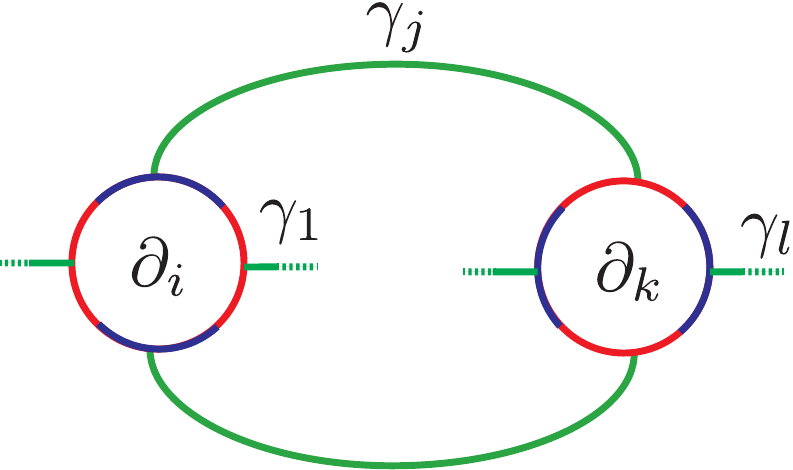}
			\caption
			{If $k\neq g$ and $\gamma_j\mathcal{D}\beta_i$, $\gamma_1$ have to intersects $\beta_k$.}
			\label{Lem4.4_s1}
	\end{figure}

	%Next, we suppose that $\gamma_1$ intersects $\beta_g$.
	%Since $\gamma'$ is good for $\beta'$, $\gamma_1\mathcal{D}\alpha_1$ and $\gamma_1\mathcal{D}\beta_g$.
	%In fact, if $\gamma_1$ intersects $\beta_g$ more than twice, $\gamma_1$ intersects $\alpha_g$ or $\beta_1$ or itself.
	%This is a contradiction. 
	%Hence $\gamma_1$ can be seen as in Figure \ref{Lem_4(1)}.

	%There is exactly one crossing made by  $\gamma_g$ and $\gamma_1$ whose under arc is $\gamma_g$.
	%Hence $\gamma_g\cup \gamma_1$ is the Hopf link.
	%By Lemma \ref{framing}, the framing coefficient of $\gamma_1$ is $0$.
	%After handlesliding $\gamma_i$ along $\gamma_1$ for $i\neq 1, g$, $\gamma_1\cup \gamma_g$ is separable from other components of $L$.
	%Since other components represent a 4-manifolds which we want to obtain, $X\cong S^1\times S^3, S^2\times S^2,  \pm\pm\mathbb{C}P^2$ or $S^4$ or connected summands of these 4-manifolds.
	\end{proof}
	%	\begin{figure}[H]
	%	\centering
	%		\includegraphics[scale=0.7]{Lem_4(1).eps}
	%		\caption
	%		{}
	%		\label{Lem_4(1)}
	%\end{figure}
	%By Lemma \ref{lem4}, we remain the cases where $\gamma_1\mathcal{D}\beta_2$ and $\gamma_2\mathcal {D}\beta_1$ or $\gamma_1\mathcal{D}\beta_1$ and $\gamma_2\mathcal{D}\beta_2$.
	\begin{lem}\label{lem5}
		Let $X$ be a 4-manifolds with trisection $\mathcal{T}$.
		If $\gamma_g'\mathcal{D}\beta_g'$, then $X$ is diffeomorphic to connect-sum of copies of $\cong S^1\times S^3, S^2\times S^2,\pm\mathbb{C}P^2$ and $S^4$.
	\end{lem}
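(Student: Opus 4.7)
My plan is to follow the pattern of Lemmas \ref{4.3} and \ref{lem4}. By Lemma \ref{simple} I would assume $(\Sigma,\alpha,\beta)$ is the standard genus-$g$ Heegaard diagram of $S^3$ with $\alpha_i\mathcal{D}\beta_i$, and using the goodness of $(\gamma',\alpha)$ reindex so that $\gamma_i\mathcal{D}\alpha_i$ for $i<g$ and $\gamma_g'\mathcal{D}\alpha_g$. Since Lemma \ref{lem4} already handles the case in which some $\gamma_i$ with $i<g$ meets $\beta_g'$, and Lemma \ref{4.3} handles parallel pairs, the goodness of $(\beta',\gamma)$ forces $\gamma_g$ to be the unique $\gamma$-partner of $\beta_g'$, so $\gamma_g\mathcal{D}\beta_g'$, and after a further reindexing (constrained by Lemma \ref{region}) $\gamma_i\mathcal{D}\beta_i$ for $i<g$.

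Next I would form the Kirby diagram $L'$ of $X$ from $(\Sigma,\alpha,\beta,\gamma')$. By Lemma \ref{unknot} every component is an unknot, and by Lemma \ref{framing} each $\gamma_i$ for $i<g$ is a $\pm 1$-framed unknot because only the $(\alpha_i,\beta_i)$ term contributes to its framing sum. Since $\beta_g'$ is disjoint from $\beta_1,\ldots,\beta_g$ and together with $\beta_1,\ldots,\beta_{g-1}$ completes the cut system, its homology class in the standard basis is
\[
[\beta_g']=\beta_g+\sum_{i<g}c_i\beta_i
\]
up to sign; and since $\gamma_g'\cap\alpha_j=\emptyset$ for $j<g$ while $\gamma_g'\mathcal{D}\alpha_g$, we may write $[\gamma_g']=\sum_{j}A_j\alpha_j+\beta_g$. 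The hypothesis $\gamma_g'\mathcal{D}\beta_g'$ then forces the homological identity $A_g+\sum_{i<g}A_ic_i=\pm 1$, and Lemma \ref{framing} gives the framing of $\gamma_g'$ as $-A_g$.

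To finish, I would analyze the crossings of $L'$: the curve $\gamma_g'$ meets $\gamma_i$ only at the corner $(\alpha_i,\beta_i)$ when $\gamma_g'\cap\beta_i\neq\emptyset$ (where $\gamma_g'$ is upper) and at $(\alpha_g,\beta_g)$ when $c_i\neq 0$ (where $\gamma_g'$ is lower). A carefully chosen sequence of handleslides of $\gamma_g'$ over the $\pm 1$-framed components $\gamma_i$ with $c_i\neq 0$ cancels the crossings near $(\alpha_g,\beta_g)$ and adjusts the framing of $\gamma_g'$ in a controlled way, eventually producing a separable unknot with framing in $\{0,\pm 1\}$. This contributes a summand diffeomorphic to $S^2\times S^2$ or $\pm\mathbb{C}P^2$; the Kirby diagram of the remaining components corresponds to a trisection diagram that is good for $\alpha$ and $\beta$, hence of length $0$, and by the Kirby-Thompson classification represents a connect-sum of copies of $S^1\times S^3$, $S^2\times S^2$, $\pm\mathbb{C}P^2$, and $S^4$.

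The main obstacle is the handleslide bookkeeping: one must verify that the handleslide sequence can be chosen so that $\gamma_g'$ ends as a separable unknot of framing in $\{0,\pm 1\}$ while the other $\gamma_i$ remain $\pm 1$-framed unknots. This amounts to an integer linear-algebra reduction driven by the identity $A_g+\sum_{i<g}A_ic_i=\pm 1$, and, as in the proof of Proposition \ref{prop_1}, will likely require a further case split according to how the support of the $c_i$ and the involutive pairing $\gamma_i\mathcal{D}\beta_{\pi(i)}$ interact.
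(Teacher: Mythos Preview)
Your argument rests on the goodness of $(\gamma',\alpha)$ and of $(\beta',\gamma)$, but neither holds in this setting. In the realization loop with one edge each in $\Gamma_\beta$ and $\Gamma_\gamma$, the good pairs are $(\alpha,\beta)$, $(\alpha,\gamma)$, and $(\beta',\gamma')$; the pairs $(\alpha,\gamma')$ and $(\beta',\gamma)$ are \emph{not} required to be good. Consequently the reindexing yielding $\gamma_g'\mathcal{D}\alpha_g$ and $\gamma_g'\cap\alpha_j=\emptyset$ for $j<g$ is unjustified: $\gamma_g'$ may meet several $\alpha_j$ many times. What the hypothesis $\gamma_g'\mathcal{D}\beta_g'$ together with the goodness of $(\gamma',\beta')$ actually gives is $\gamma_g'\cap\beta_j=\emptyset$ for $j<g$, so in the standard basis $[\gamma_g']=y_{2g-1}\alpha_g+\sum_j y_{2j}\beta_j$ with $y_{2g-1}=\pm1$, not your $\sum_jA_j\alpha_j+\beta_g$; the roles of $\alpha$ and $\beta$ in your homology computation are swapped, and the framing of $\gamma_g'$ is $\mp y_{2g}$, which is not a priori bounded.

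The same confusion undermines the claim that one can reindex so that $\gamma_i\mathcal{D}\beta_i$ for all $i<g$. Each $\gamma_i$ (for $i<g$) is good for $\alpha$ and for $\beta'$, but it may meet $\beta_g$ many times, so Lemma~\ref{region} does not apply and the permutation $\tau$ with $\gamma_i\mathcal{D}\beta_{\tau(i)}$ can be nontrivial. When $\tau(i)\neq i$ one has $\gamma_i\cap\beta_i=\emptyset$ and the framing of $\gamma_i$ is $0$, not $\pm1$, so your handleslide scheme (slide $\gamma_g'$ over $\pm1$-framed $\gamma_i$) does not go through. The paper proceeds differently: it first proves $\gamma_g'$ is an unknot by a torus-knot argument using $\gamma_g'\cap\beta_j=\emptyset$ for $j<g$, and then performs a case split on which curve of $\gamma$ (not $\gamma'$) meets $\alpha_g$, working with the Kirby diagram $L$ built from $\gamma$ as well as $L'$ built from $\gamma'$; that case analysis is precisely what handles the nontriviality of $\tau$.
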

	\begin{proof}
		By Lemma \ref{simple}, we can assume that $(\Sigma, \alpha, \beta)$ is the standard Heegaard diagram of $S^3$.
		Let $L$ and $L'$be Kirby diagrams obtained from $\gamma$ and $\gamma'$ respectively.
		Now $\gamma_i$ is good for $\alpha$ and $\beta'$.
		Hence we can assume that $\gamma_i$ intersects one of the curves of $\beta'$ other than $\beta_g'$, and $\alpha_k$ exactly once and $\beta_g$ several times for $k=1, . . . , g$, $i =1, . . . , g-1$.
		Let $\{ \alpha_1, \beta_1, . . . , \alpha_g, \beta_g\}$ be a basis of $H_1(\Sigma)$.
		Now, $\beta_g'$ does not intersects $\beta_i$ for $i=1, . . . . , g$.
		Then we can suppose that 
		 \begin{alignat}{1}
			\beta_g'&=(0, z^g_2, 0, z^g_4, 0, . . . . 0, z^g_{2g})\nonumber
		\end{alignat}
		in $H_1(\Sigma)$ since $i(\beta_g', \beta_i)=0$ for $i=1, . . . , g$.
		Since $\gamma_g'$ does not intersects $\beta_i$ for $i=1, . . . , g-1$, we can suppose that 
		\begin{alignat}{1}
			\gamma_g'&=(0, y^g_2, 0, y^g_4, 0, . . . . , y^g_{2g-1}, y^g_{2g})\nonumber
		\end{alignat}
		in $H_1(\Sigma)$.
		Since $\beta_g'$ and $\gamma_g'$ intersect at exactly one point, we can assume that 
		\[
			i(\beta_g', \gamma_g')=z^g_{2g}y^g_{2g-1}=1
		\]
		by orientating $\gamma_g$ to be so.
		This implies that $(z^g_{2g}, y^g_{2g-1})=(\pm 1, \pm 1)$.
		Then we shall obtain
		 \begin{alignat}{1}
			\beta_g'&=(0, z^g_2, 0, z^g_4, 0, . . . . , 0, \pm 1)\nonumber \\
			\gamma_g'&=(0, y^g_2, 0, y^g_4, 0, . . . . , \pm 1, y^g_{2g})\nonumber
		\end{alignat}

		\begin{claim}\label{cl1}
			$\gamma_g'$ is an unknot.
		\end{claim}
		
		\begin{proof}[Proof of Claim]
			In the construction of the Kirby diagram in Section 4, the disks attached to the boundary of $\Sigma-D^2$ along $\beta$ were $D_1, ... . D_g$.
			Let $h_i$ be a 2-dimensional 2-handle of $\Sigma-D^2$ which contains $\alpha_i$ for $i=1, . . . . , g-1$.
			$\gamma_g'$ can be embedded in a punctured torus $T=((\Sigma-D^2)-(\cup_i h_i)) \cup D_1\cup D_2\cup\cdots \cup D_{g-1}$ since $\gamma_g'$ does not intersects $\beta_i$ for $i=1, . . . . , g-1$.
			Now we have
			\[
				\gamma_g'=(0, y^g_2, 0, y^g_4, 0, . . . . , \pm 1, y^g_{2g}).
			\]
			We can assume that $\alpha_g$ is a meridian of $T$ and $\beta_g$ a longitude of $T$.
			Since $i(\gamma_g', \alpha_g)=\pm 1$ and $i(\gamma_g', \beta_g)= y^g_{2g}$, $\gamma_g'$ is a torus knot $T(\pm 1, y^g_{2g})$. This is an unknot in a diagram $L'$.
			This implies that $\gamma_g'$ is an unknot component in $L'$.
		\end{proof}
		Suppose  $\gamma_j\mathcal{D}\alpha_g$ and $\gamma_j\mathcal{D}\beta_k$ for $j, k\neq  g$.
		Then, $\gamma_k\mathcal{D}\alpha_k$ and $\gamma_k\mathcal{D}\beta_l$ for $l\neq  j, k, g$ or $\gamma_g\mathcal{D}\alpha_k$.
		If $\gamma_g\mathcal{D}\alpha_k$, $\gamma_g\cup \gamma_j$ is the Hopf link in $L$ that is separable from any other components of $L$.
		Since remnants of $\gamma$ are good for $\alpha$ and $\beta$, $L$ represents the 4-manifolds we want to obtain. Then we can complete the proof in this case.
		
		Suppose  $\gamma_k\mathcal{D}\alpha_k$ and $\gamma_k\mathcal{D}\beta_l$ for $k\neq j, g$ and $l\neq  j, k, g$
		If $\alpha_l\mathcal{D}\gamma_l$ for $l\neq g$, This contradicts Lemma \ref{region}.
		Hence $\alpha_l\mathcal{D} \gamma_g$.
		If $\gamma_k$ intersects $\beta_g$ more than twice, $\gamma_k$ have to intersects $\alpha_g$.
		This contradicts that $\gamma_k$ is good for $\alpha$.
		Since $\gamma_k$ does not intersects $\alpha_g$ and good for $\alpha$ and $\beta'$, the framing coefficient of $\gamma_k$ is 0 by Lemma \ref{framing}.
		Now,  $\gamma_j\cup \gamma_k\cup \gamma_g$ is separable from any other components of $L$ since $\gamma_k$ is upper than any components of $L$ other than $\gamma_j$ and $\gamma_g$.
		Since $\gamma_k$ intersects $\beta_g$ exactly once, $\gamma_j\cup \gamma_k$ is the Hopf link in $L'$.
		Also, since $\gamma_g$ is always upper than $\gamma_j$, $\gamma_g\cup \gamma_j$ is unlink in $L'$.
		Furthermore, since $\gamma_k$ intersects $\beta_l$ exactly once, $\gamma_k\cup \gamma_g$ is the Hopf link in $L'$.
		Hence, after handlesliding $\gamma_j$ along $\gamma_g$, and handlesliding $\gamma_g$ along $\gamma_k$ finitely many times,  $\gamma_g\cup \gamma_j\cup \gamma_k$ is a union of the unknot and the Hopf link they are separable from each other in $L'$.
		Also, they are separable from any other components of $L$.
		This completes the proof in this case.
		
		Suppose that $\gamma_g\mathcal{D}\alpha_g$.
		Then $\gamma_g'$ is separable from any other component since $\gamma_g\mathcal{D}\alpha_g$ in $L'$.
		Therefore $\gamma_g'$ is an unknot that is separable from any other components of  $L$ since $\gamma_g'$ is always lower than any other components of $L$.
		%Let $\gamma_g\mathcal{D}\alpha_i$ for one of $i=1, . . . , g$ and $\Sigma'=\Sigma-N(\alpha_i)\cup D^2\cup D^2$.
		Let $\Sigma'=\Sigma-N(\alpha_g)\cup D^2\cup D^2$.
		$\Sigma'$ is capping off of $\Sigma-N(\alpha_g)$ by two disks.
		Then $\gamma_j$ for $j=1, . . . , g-1$ does not intersects $\alpha_g$ since $\gamma'$ is good for $\alpha$.
		Let $\alpha'=\alpha-\alpha_g$, $\beta''=\beta-\beta_g$ and $\gamma''=\gamma'-\gamma_g'$.
		Then $(\Sigma', \alpha', \beta'', \gamma'')$ is a trisection diagram of $X$ with length 0.
		Hence remnants of $\gamma'$ represent the 4-manifolds we want to obtain.
	\end{proof}	
	Now, By Lemma \ref{lem4} and \ref{lem5} we obtain the following proposition.
	\begin{prop}\label{prop_3}
		Let $\mathcal{T}$ be a trisection of $X$ with $L_{X, \mathcal{T}}=2$ and $l$ a realization loop of $\mathcal{T}$.
		If both $\Gamma_{\gamma}\cap l$ and $\Gamma_{\beta}\cap l$ have exactly one edge, then $X$ is diffeomorphic to connect-sum of copies of $\cong S^1\times S^3, S^2\times S^2,\pm\mathbb{C}P^2$ and $S^4$.
	\end{prop}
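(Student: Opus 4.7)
The plan is to deduce Proposition \ref{prop_3} as a direct case-combination of the three preceding lemmas (\ref{4.3}, \ref{lem4}, \ref{lem5}), after using Lemma \ref{simple} to normalize the $\alpha$-$\beta$ side of the diagram.

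First I would invoke Lemma \ref{simple} to reduce to the case where $(\Sigma,\alpha,\beta)$ is the standard genus-$g$ Heegaard diagram of $S^3$, so $\alpha_i\mathcal{D}\beta_i$ for every $i$; any parallelism between an $\alpha$-curve and a $\beta$-curve (or between $\alpha$ and $\gamma$) can be split off as $S^4$ or $S^1\times S^3$ summands. Because $l$ has exactly one edge in each of $\Gamma_\gamma$ and $\Gamma_\beta$, the endpoints of those edges on the $\gamma$-side and $\beta$-side may be written $\gamma'=(\gamma_1,\ldots,\gamma_{g-1},\gamma_g')$ and $\beta'=(\beta_1,\ldots,\beta_{g-1},\beta_g')$, and the definition of the length forces $\gamma'$ and $\beta'$ to be in good position with respect to each other.

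Next I would analyze the good-position pairing between $\gamma'$ and $\beta'$, focusing on the new curve $\beta_g'$. Goodness means that after reordering, each curve of $\beta'$ is matched with a unique curve of $\gamma'$ that is either parallel to it or meets it in a single point. Thus exactly one of the following occurs:
\begin{enumerate}
\item[(A)] some curve of $\gamma'$ is parallel to some curve of $\beta'$;
\item[(B)] $\gamma_i\mathcal{D}\beta_g'$ for some $i\in\{1,\ldots,g-1\}$ (and no parallelism occurs);
\item[(C)] $\gamma_g'\mathcal{D}\beta_g'$ (and no parallelism occurs).
\end{enumerate}
In case (A), Lemma \ref{4.3} already gives the desired diffeomorphism type. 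In case (B), Lemma \ref{lem4} applies and yields the conclusion. In case (C), Lemma \ref{lem5} applies and yields the conclusion. Since the three cases are exhaustive, $X$ is in every case a connected sum of copies of $S^1\times S^3$, $S^2\times S^2$, $\pm\mathbb{C}P^2$, and $S^4$.

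The main obstacle is not really in the proof of Proposition \ref{prop_3} itself, which amounts to organizing the three lemmas, but in verifying that the trichotomy (A)--(C) truly exhausts every configuration compatible with the hypothesis. The subtle point is that goodness of $\gamma'$ and $\beta'$ must be read off from the structure of the realization loop rather than assumed a priori; once that is justified, the proposition follows without further calculation. All genuine combinatorial work (controlling the Kirby diagram, producing Hopf-link summands, and reducing to a length-$0$ sub-trisection) has already been carried out inside Lemmas \ref{4.3}, \ref{lem4}, and \ref{lem5}.
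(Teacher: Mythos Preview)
Your proposal is correct and matches the paper's approach exactly: the paper states Proposition \ref{prop_3} immediately after noting ``By Lemma \ref{lem4} and \ref{lem5} we obtain the following proposition,'' with Lemma \ref{4.3} having already disposed of the parallel case, so the proof is precisely the trichotomy (A)--(C) you describe. The only quibble is phrasing: (A) is not mutually exclusive with (B) or (C), so it is cleaner to say ``if any parallelism occurs apply Lemma \ref{4.3}; otherwise $\beta_g'$ is dual to some curve of $\gamma'$, giving (B) or (C).''
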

	
	\begin{proof}[Proof of Theorem \ref{thm1}]
		 Let $\mathcal{T}$ be a trisection, $(\Sigma, \alpha, \beta, \gamma)$  a trisection diagram of $\mathcal{T}$ and $l$ a realization loop of $\mathcal{T}$.
		 If the length of trisection $\mathcal{T}$  equals 2, it will be sufficient to consider the following cases.
		 \begin{enumerate}
 				\item $\Gamma_\gamma\cap l$ consists of two edge and $\alpha=\alpha_\beta=\alpha_\gamma$ and $\beta=\beta_\alpha=\beta_\gamma$.
				\item Both $\Gamma_\gamma\cap l$ and $\Gamma_\beta\cap l$ are consists of one edge and $\alpha=\alpha_\beta=\alpha_\gamma$.
		 \end{enumerate}
		By Proposition \ref{prop_1} and \ref{prop_2}, we can obtain the conclusion of theorem in the case where $\Gamma_{\gamma}\cap l$ has two edges.
		By Proposition \ref{prop_3},  we can obtain the conclusion of the theorem in the case where both $\Gamma_{\gamma}\cap l$ and $\Gamma_{\beta}\cap l$ has exactly one edge.
	\end{proof}
	
	There is a 4-manifold $X$ with $L_X\leq6$ and $L_X\neq 0$ named $Q$ in \cite{KT}. The remainder of the work is to investigate what kind of 4-manifold trisections can be trisections such that the length is 3, 4, or 5.
	
\section{Acknowledgements}
 		The author thanks his supervisor Koya Shimokawa for meaningful comments and discussions. Also, we thanks Hironobu Naoe for his helpful comments on a Kirby diagram in this paper.
		The author was partially supported by Grant-in-Aid for JSPS Research Fellow from JSPS KAKENHI Grant Number JP20J20545.
%=====================================================================================================================================================================================================

%======================================================================================================================================================================================================================================= 

\end{document}